\newtheorem{prop}{Proposition}
\newtheorem{lemma}{Lemma}
\newtheorem{theorem}{Theorem}
\newtheorem{remark}{Remark}
\def\real{{\mathord{{\rm I\kern-2.8pt R}}}}        
\def\inte{{\mathord{{\rm I\kern-2.8pt N}}}}
\def\sZZ{{\rm Z\kern-2.8ptem{}Z}}
\def\z{{\mathchoice
		{\sZZ}
		{\sZZ}
		{\rm Z\kern-0.30em{}Z}
		{\rm Z\kern-0.25em{}Z} }}
\def\sQQ{{\kern 0.27em \vrule height1.45ex width0.03em depth0em
		\kern-0.30em \rm Q}}
\def\qu{{\mathchoice
		{\sQQ}
		{\sQQ}
		{\kern 0.225em \vrule height1.05ex width0.025em depth0em \kern-0.25em \rm Q}
		{\kern 0.180em \vrule height0.78ex width0.020em depth0em \kern-0.20em \rm Q}
}}
\def\sCC{{\kern 0.27em \vrule height1.45ex width0.03em depth0em
		\kern-0.30em \rm C}}
\def\complex{{\mathchoice
		{\sCC}
		{\sCC}
		{\kern 0.225em \vrule height1.05ex width0.025em depth0em \kern-0.25em \rm C}
		{\kern 0.180em \vrule height0.78ex width0.020em depth0em \kern-0.20em \rm C}
}}
\newcommand{\ba}{\begin{array}}
	\newcommand{\ea}{\end{array}}
\newcommand{\be}{\begin{equation}}
	\newcommand{\ee}{\end{equation}}
\newcommand{\bea}{\begin{eqnarray}}
	\newcommand{\eea}{\end{eqnarray}}
\newcommand{\beaa}{\begin{eqnarray*}}
	\newcommand{\eeaa}{\end{eqnarray*}}
\def\z{\zeta}
\font\tenmath=msbm10 \font\sevenmath=msbm7 \font\fivemath=msbm5
\def \={{\buildrel {\rm (law)} \over =}}
\newcommand{\basa}{\begin{assumption}}
	\newcommand{\easa}{\end{assumption}}
\newcommand{\bas}{\begin{assum}}
	\newcommand{\eas}{\end{assum}}
\newcommand{\ignore}[1]{}
\begin{document}
	
	\renewcommand{\thefootnote}{\fnsymbol{footnote}}
	
	\renewcommand{\thefootnote}{\fnsymbol{footnote}}

	\title{Asymptotic normality for a modified quadratic variation of the Hermite process}
	\author{Antoine Ayache and  Ciprian A. Tudor\vspace*{0.2in} \\
		CNRS, Universit\'e de Lille \\
		Laboratoire Paul Painlev\'e UMR 8524\\
		F-59655 Villeneuve d'Ascq, France.\\
		\quad  antoine.ayache@univ-lille.fr\\
		\quad ciprian.tudor@univ-lille.fr\\
		\vspace*{0.1in} }
	
	\maketitle
	
	\begin{abstract}
	We consider  a modified quadratic variation of the Hermite process based on some well-chosen increments of this process.  These special increments have the very useful property to be independent and identically distributed up to asymptotically negligible remainders. We prove that this modified quadratic variation satisfies a Central Limit Theorem and we derive its rate of convergence under the Wasserstein distance via Stein-Malliavin calculus.  As a consequence, we construct, for the first time in the literature related to Hermite processes, a strongly consistent and asymptotically normal estimator for the Hurst parameter.
\end{abstract}

	\vskip0.3cm
	
	{\bf 2010 AMS Classification Numbers:}  60H15, 60H07, 60G35.

	\vskip0.3cm
	
	{\bf Key words:} Hermite process, fractional Brownian motion, parameter estimation,
	multiple Wiener-It\^o integrals,  Stein-Malliavin calculus, strong
	consistency, asymptotic normality, Ornstein-Uhlenbeck
	process, Hurst index estimation.

		\section{Introduction}
	Let an arbitrary integer $q\ge 1$. The Hermite process of order $q$ is one of  the most classical examples of a stochastic process belonging to the $q$th Wiener chaos. When $q=1$ it reduces to the well-known Gaussian fractional Brownian motion. The Hermite process becomes non-Gaussian as soon as $q\ge 2$. Yet, similarly to fractional Brownian motion, it is self-similar with stationary correlated increments possessing long memory property.  We refer to the monographs \cite{PiTa-book} or \cite{T} for a detailed presentation of Hermite processes and their applications and many other related topics. 
	
	Stochastic analysis of Hermite and related processes has been developed in the last decades. This is due to the fact that nice properties (non-Gaussianity, finiteness of all moments, self-similarity, long memory, and so on) of these processes make them natural candidates to model various phenomena. Also, stochastic analysis of Hermite processes is interesting in its own right since the classical Itô calculus fails to be applicable to them. It allows for a better understanding of properties of non-Gaussian chaotic stochastic processes, and it raises challenging mathematical problems involving Malliavin calculus and Stein method among many other powerful methodologies coming from probability theory and  functional analysis. We refer to the monographs \cite{jan97,NP-book,N}  for a detailed presentation of these topics. 
	
	One of the problems widely studied concerning the Hermite process is the statistical estimation of its Hurst parameter (or self-similarity index), usually denoted by $H$ and which belongs to the open interval $(\frac{1}{2}, 1)$. This parameter determines many characteristics  of the process (the scaling property, the moments, the intensity of the long memory, the regularity of sample paths, etc.)  and therefore its estimation is of utmost importance for applications. A main approach to construct estimators for the Hurst parameter is based on the analysis of quadratic (or higher order) variations of Hermite process (see e.g.  \cite{CTV2,TV,coeur}), but other techniques have been also employed, such as wavelet analysis in \cite{BaTu,CRTT1,CRTT2}, and least-squares-type estimators in \cite{NT}.  While these approaches lead to consistent estimators for $H$, a general fact appearing in the references is that these estimators are not asymptotically normal, their limit distribution being in  general a Rosenblatt random variable (the value at time 1 of the Hermite process of order $q=2$).  Such a non-Gaussian behavior in statistical estimation of Hurst parameter was even identified a long time ago in the very particular Gaussian case ($q=1$) of fractional Brownian motion, when  $H\in (3/4,1)$ and its estimator is derived form usual quadratic variations (see for instance \cite{Taq75,DoMa79}). Yet, it was shown in \cite{IL97} that in this very special Gaussian case one can recover asymptotic normality for the latter estimator by replacing in quadratic variations the 1-order increments of the process by higher order increments. Unfortunately, such a strategy fails to work for all the non-Gaussian Hermite processes, that is as soon as $q\ge 2$ (see e.g. \cite{CTV}).
	
	The purpose of our present article is to propose a new strategy for overcoming the non-Gaussian behavior in statistical estimation of Hurst parameter of any Hermite process including fractional Brownian motion. To this end, we define a so-called modified quadratic variation of Hermite process, which is obtained only through some well-chosen 1-order increments of this process. The idea behind the definition of the modified quadratic variation comes from the recent paper \cite{A} in which some well-chosen increments of the Hermite process, over consecutive dyadic numbers of the same level, were introduced in order to bound from below local oscillations of the process. Roughly speaking, the crucial advantage in using these well-chosen increments is that their dominant parts are independent and identically distributed.
	
	Thanks to the latter crucial advantage, by using  techniques from  the Stein-Malliavin calculus, we show that the modified quadratic variations sequence satisfies a Central Limit Theorem (CLT) and we  determine the rate of convergence, under the Wasserstein distance, associated to this CLT. Then,  by standard arguments, we define an estimator for the Hurst parameter of the Hermite process based on the modified quadratic variation and we prove its strong consistency and asymptotic normality. 
	From the statistical point of view, our estimator needs less empirical data than those defined in e.g. \cite{CTV} or \cite{TV}, since it includes in its expression only a part of the increments of the Hermite process  over consecutive dyadic numbers with the same level of  the interval $[0,1]$.  We believe that our method has potential to be applied to other stochastic processes related to the class of Hermite processes. In order to illustrate this fact, we treat the case of the Ornstein-Uhlenbeck process associated to the Hermite process.  We analyze  the  behavior of its modified quadratic variation and we discuss the estimation of its Hurst parameter. 
	
	The remaining of our article is organized as follows. In Section \ref{sec2}, we recall the definition of the Hermite process of any order $q\ge 1$ and its basic properties. In Section \ref{sec3}, we focus on some "nice" increments of it which are reminiscent of those previously introduced in \cite{A}; we precisely show how these increments of Hermite process can be decomposed in dominant and negligible parts in such a way that the dominant parts be independent and identically distributed, then we determine the asymptotic behavior of the second and fourth moments of the dominant parts. Section \ref{sec4} is the keystone of our article, we define in terms of the "nice" increments of Hermite process the modified quadratic variation $V_N$ and its dominant part  $V_{N,1}$ as well as its two negligible parts $V_{N,2}$ and $V_{N,3}$; then using techniques from the Stein-Malliavin calculus, we prove that the distribution of $V_{N,1}$, and consequently that of $V_N$, converges to a centered Gaussian distribution at a fast rate quantified in terms of the Wasserstein distance.  The goal of Section \ref{sec5} is to show that the estimator $\widehat{H}_{N}$ for the Hurst parameter, derived from $V_N$, is strongly consistent and asymptotically normal.
	In Section \ref{sec6}, it is shown that the method developed in the two previous sections can also be employed to obtain asymptotic normality of modified quadratic variation of the Hermite Ornstein-Uhlenbeck process (the solution to the Langevin equation with Hermite noise) as well as a strongly consistent and asymptotically normal estimator for its Hurst parameter. Finally, Section \ref{app} is the Appendix where we included the notions related with Wiener chaos and Malliavin calculus needed in our work. 
	
	\section{Preliminaries}
	\label{sec2}
	
	In this preliminary part, we introduce the Hermite process and we recall its basic properties. Let an arbitrary integer $q\geq 1$ and a real number $H \in ( \frac{1}{2}, 1)$. One denotes by $ ( Z ^{H,q}_{t}, t\geq 0)$ the Hermite process of order $q$ and with self-similarity index (or Hurst parameter) $H$. Using the convention that, for all $(x,a)\in\mathbb{R}^2$, one has $x_{+}^{a}=x^a$ when $x>0$ and $x_{+}^{a}=0$ otherwise, the process $ ( Z ^{H,q}_{t}, t\geq 0)$ can be defined through multiple Wiener integral as
	\begin{equation}
		\label{hermite}
		Z^{H, q}_{t}= c(H,q) \int_{\mathbb{R} ^{q} } \left( \int_{0}^{t}(u-y_{1} )_{+}^{-\left( \frac{1}{2}+ \frac{1-H}{q}\right)}\ldots (u-y_{q} )_{+}^{-\left( \frac{1}{2}+ \frac{1-H}{q}\right)}du\right) dB(y_{1}) \ldots dB(y_{q}),
	\end{equation}
	where $ (B(y), y \in \mathbb{R})$ is a two-sided Wiener process and $c(H,q)$ is a strictly positive normalizing constant chosen such that  $ \mathbf{E} ( Z^{H,q}_{t}) ^{2}= t^{2H}.$  We can also express the random variable $ Z^{H,q}_{t}$ as 
	\begin{equation*}
		Z ^{H,q}_{t}= I_{q} ( L ^{H,q}_{t}),
	\end{equation*} 
	where $ I_{q}$ stands for the multiple stochastic integral of order $q$ with respect to the Brownian motion $B$ and $ L ^{H,q}$ is the kernel of the Hermite process given by 
	\begin{equation}\label{L}
		L^{H,q}_{t}(y_{1},\ldots, y_{q})= c(H,q)\int_{0}^{t}(u-y_{1} )_{+}^{-\left( \frac{1}{2}+ \frac{1-H}{q}\right)}\ldots (u-y_{q} )_{+}^{-\left( \frac{1}{2}+ \frac{1-H}{q}\right)}du,
	\end{equation}	
	for every $y_{1},\ldots, y_{q}\in \mathbb{R}$.
	
	It is well-known that $ L ^{H,q}_{t} $ belongs to $ L^{2} (\mathbb{R}^{q})$ for every $t\geq 0$ and this ensures  that the Hermite process $ Z ^{H,q}$ is well-defined. Moreover, the process $ (Z^{H,q}_{t}, t\geq 0)$ is $H$-self-similar, it has stationary and correlated increments with long memory and its sample paths are, modulo a modification, H\"older continuous of any arbitrary order $\delta \in (0,H)$. Its covariance function reads
	\begin{equation*}
		\mathbf{E} Z ^{H,q}_{t} Z ^{H,q}_{s}= \frac{1}{2} \left( t ^{2H}+ s^{2H}-\vert t-s\vert ^{2H} \right) \mbox{ for every } s,t\geq 0. 
	\end{equation*}
	Moreover, the increments of the Hermite process satisfy, for every $s,t\geq 0$ and $p\geq 1$, 
	\begin{equation}\label{29a-4}
		\mathbf{E} \left| Z^{H,q}_{t}- Z ^{H,q}_{s} \right| ^{p} = \mathbf{E}\left| Z ^{H,q}_{1} \right| ^{p} \vert t-s\vert ^{Hp}.
	\end{equation}
	
	\begin{remark}
		We notice that a different parametrization of the Hermite process is sometimes used in the literature, and in particular in the reference \cite{A}, whose results are used in the sequel. In this reference, the self-similarity index of the Hermite process (which is $H$ in our work) is given by $q(H-1)+1$. 
	\end{remark}

	\section{Some nice increments of the Hermite process}\label{sec3}
	
	In this part, we analyze some  well-chosen increments of the Hermite process. Such increments have already been introduced in the reference \cite{A}. Among all the increments of the Hermite process between two consecutive dyadic numbers of the same level $N$, we choose certain of them with nice properties. Actually, each chosen increment can be decomposed into the sum of a dominant part and of another term  which is asymptotically negligible.
	Moreover, the dominant parts of these increments form a sequence of independent and identically distributed random variables.

	Let us start with some notations.  Let us fix $\beta \in (0,1)$. For every $N\geq 1$, we  consider the set 
	\begin{equation}
		\label{ln}
		\mathcal{L}_{N}=\mathbb{N} \cap \left[ 1, \frac{ 2 ^{N}}{\left[2 ^{N ^{\beta}}\right]}\right],
	\end{equation}
	where $[x]$ denotes the integer part of $x\in \mathbb{R}$. Let us also introduce the following set 
	\begin{equation}\label{lng}
		\mathcal{L}_{N, \gamma}=	\mathcal{L}_{N}\cap \left[ 1, \left[ 2 ^{N ^{\gamma}}\right]\right] \mbox{ with } \gamma <\beta. 
	\end{equation}
	It is clear that, for every $N\geq 1$, we have
	\begin{equation*}
		\mathcal{L}_{N, \gamma}\subseteq \mathcal{L}_{N}
	\end{equation*}
	and 
	\begin{equation}
		\label{8aa-7}
		\left| 	\mathcal{L}_{N, \gamma}\right| \leq 2 ^{N ^{\gamma}},
	\end{equation}
	where $\left| \mathcal{L}_{N, \gamma}\right|$ denotes the cardinality of $\mathcal{L}_{N, \gamma}$. Moreover, for  all $N$ large enough, we have 
	\begin{equation}
		\label{8aa-7bis}
		\left| 	\mathcal{L}_{N, \gamma}\right| \geq 2 ^{N ^{\gamma}}-1.
	\end{equation}
	We mention in passing that the roles of the two parameters $\beta \in (0,1)$ and $\gamma\in (0,\beta)$ is discussed in Remark \ref{rem:rolebega} at the end of Section \ref{sec5}.
	
	For $l \in \mathcal{L} _{N}$, we consider the following increment of length $2 ^{-N}$ of the Hermite process
	\begin{equation}
		\label{deltax}
		\Delta Z ^{H,q}_{l, N}= Z ^{H,q}_{ \frac{ l\left[ 2 ^{N ^{\beta}}\right]+1}{2 ^{N}}}- Z ^{H,q}_{ \frac{ l\left[ 2 ^{N ^{\beta}}\right]}{2 ^{N}}}= Z ^{H,q}_{ e_{l,N,\beta}+ 2 ^{-N}}- Z ^{H,q}_{e_{l,N,\beta}},
	\end{equation}
	where we denoted, for $l\in \mathcal{L}_{N}$,
	\begin{equation}\label{e}
		e_{l, N, \beta}= \frac{l\left[ 2 ^{N ^{\beta}}\right]}{2 ^{N}}.
	\end{equation}
	
	\noindent In view of (\ref{hermite}) and (\ref{e}), $\Delta Z ^{H,q}_{l, N}$ can be expressed as
	
	\begin{eqnarray*}
		\Delta Z ^{H,q}_{l, N}&=&c(H,q)  \int_{\mathbb{R}^{q}}dB(y_{1})\ldots dB(y_{q}) 1_{ \left( -\infty, \frac{l \left[ 2 ^{N ^{\beta}}\right]+1}{2^{N}}\right] ^{q} } (y_{1},..., y_{q})\nonumber \\
		&&\int_{ \frac{l\left[ 2 ^{N ^{\beta}}\right]}{2 ^{N}}}  ^{ \frac{l\left[ 2 ^{N ^{\beta}}\right]+1}{2 ^{N}}}(u-y_{1} )_{+}^{-\left( \frac{1}{2}+ \frac{1-H}{q}\right)}\ldots (u-y_{q} )_{+}^{-\left( \frac{1}{2}+ \frac{1-H}{q}\right)}du\nonumber \\
		&=&c(H,q)  \int_{\mathbb{R}^{q}}dB(y_{1})\ldots dB(y_{q}) 1_{ \left( -\infty, e_{l,N,\beta}+2 ^{-N}\right] ^{q} } (y_{1},..., y_{q})\nonumber\\
		&&\int_{ e_{l,N,\beta}}^{e_{l,N,\beta}+2 ^{-N}}(u-y_{1} )_{+}^{-\left( \frac{1}{2}+ \frac{1-H}{q}\right)}\ldots (u-y_{q} )_{+}^{-\left( \frac{1}{2}+ \frac{1-H}{q}\right)}du.\nonumber 
	\end{eqnarray*}
	Then, we decompose $	\Delta Z ^{H,q}_{l, N}$  as follows
	\begin{equation*}
		\Delta Z ^{H,q}_{l, N}=\tilde{\Delta} Z ^{H,q}_{l, N}+ \check{\Delta} Z ^{H,q}_{l, N}.
	\end{equation*}
	Above we used the notation 
	\begin{eqnarray}
		\tilde{\Delta} Z ^{H,q}_{l, N}&=&c(H,q) \int_{\mathbb{R}^{q}}dB(y_{1})\ldots dB(y_{q}) 1_{ \left( \frac{(l-1) \left[ 2 ^{N ^{\beta}}\right]+1}{2^{N}}, \frac{l \left[ 2 ^{N ^{\beta}}\right]+1}{2^{N}}\right] ^{q} } (y_{1},..., y_{q})\nonumber \\
		&&\int_{ \frac{l\left[ 2 ^{N ^{\beta}}\right]}{2 ^{N}}}  ^{ \frac{l\left[ 2 ^{N ^{\beta}}\right]+1}{2 ^{N}}}(u-y_{1} )_{+}^{-\left( \frac{1}{2}+ \frac{1-H}{q}\right)}\ldots (u-y_{q} )_{+}^{-\left( \frac{1}{2}+ \frac{1-H}{q}\right)}du  \nonumber \\
		&=&c(H,q) \int_{\mathbb{R}^{q}}dB(y_{1})\ldots dB(y_{q}) 1_{ \left(  e_{l-1, N,\beta}+ 2 ^{-N}, e_{l,N,\beta}+ 2 ^{-N}\right]^{q}}(y_{1},..,y_{q}) \nonumber \\
		&&\int_{ e_{l, N,\beta}}^{e_{l,N, \beta}+ 2 ^{-N}}(u-y_{1} )_{+}^{-\left( \frac{1}{2}+ \frac{1-H}{q}\right)}\ldots (u-y_{q} )_{+}^{-\left( \frac{1}{2}+ \frac{1-H}{q}\right)}du \label{9aa-1}
	\end{eqnarray}
	and 
	\begin{eqnarray}
		\check{\Delta} Z ^{H,q}_{l, N}&=& c(H,q) \int_{\mathbb{R}^{q}}dB(y_{1})\ldots dB(y_{q}) 1_{\overline{ \left( \frac{(l-1) \left[ 2 ^{N ^{\beta}}\right]+1}{2^{N}}, \frac{l \left[ 2 ^{N ^{\beta}}\right]+1}{2^{N}}\right] ^{q}} } (y_{1},..., y_{q})\nonumber \\
		&&\int_{ \frac{l\left[ 2 ^{N ^{\beta}}\right]}{2 ^{N}}}  ^{ \frac{l\left[ 2 ^{N ^{\beta}}\right]+1}{2 ^{N}}}(u-y_{1} )_{+}^{-\left( \frac{1}{2}+ \frac{1-H}{q}\right)}\ldots (u-y_{q} )_{+}^{-\left( \frac{1}{2}+ \frac{1-H}{q}\right)}du\nonumber\\
		&=&  c(H,q) \int_{\mathbb{R}^{q}}dB(y_{1})\ldots dB(y_{q}) 1_{ \overline{ \left(  e_{l-1, N,\beta}+ 2 ^{-N}, e_{l,N,\beta}+ 2 ^{-N}\right]^{q}}}(y_{1},..,y_{q}) \nonumber \\
		&&\int_{ e_{l, N,\beta}}^{e_{l,N, \beta}+ 2 ^{-N}}(u-y_{1} )_{+}^{-\left( \frac{1}{2}+ \frac{1-H}{q}\right)}\ldots (u-y_{q} )_{+}^{-\left( \frac{1}{2}+ \frac{1-H}{q}\right)}du,\label{9aa-11}
	\end{eqnarray}
	where
	\begin{eqnarray*}
		&&\overline{ \left( \frac{(l-1) \left[ 2 ^{N ^{\beta}}\right]+1}{2^{N}}, \frac{l \left[ 2 ^{N ^{\beta}}\right]+1}{2^{N}}\right] ^{q}}\\
		&&=\left(-\infty, \frac{l \left[ 2 ^{N ^{\beta}}\right]+1}{2^{N}}\right] ^{q}\setminus\left( \frac{(l-1) \left[ 2 ^{N ^{\beta}}\right]+1}{2^{N}}, \frac{l \left[ 2 ^{N ^{\beta}}\right]+1}{2^{N}}\right]^{q}.
	\end{eqnarray*}
	We refer to $\tilde{\Delta} Z ^{H,q}_{l, N}$ as the dominant part of the increment (\ref{deltax}), while $\check{\Delta} Z ^{H,q}_{l, N}$ can be viewed as its negligible part. The reason is given by the results in Propositions \ref{pp1} and \ref{pp2} below. 
	
	The following lemma plays an important role in the sequel. It shows that the dominant parts of the increments (\ref{deltax}) are independent between them and identically distributed, while their negligible parts have all the same distribution. 
	
	\begin{lemma}\label{ll1}
		For $N\geq 1$, we have 
		\begin{enumerate}
			\item The random variables $\tilde{\Delta} Z ^{H,q}_{l, N}, l\in \mathcal{L}_{N}$, given by (\ref{9aa-1}), are independent and identically distributed.
			
			\item The random variables $\check{\Delta} Z ^{H,q}_{l, N}, l\in \mathcal{L}_{N}$, given by (\ref{9aa-11}), are  identically distributed.

		\end{enumerate}
	\end{lemma}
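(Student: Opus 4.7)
The plan is to exploit two elementary properties of the multiple Wiener--It\^o integral with respect to the two-sided Brownian motion $B$ entering (\ref{hermite}): first, that multiple integrals built from Brownian increments over disjoint regions of $\mathbb{R}$ are independent; second, that $B$ is translation-invariant in law, so a well-chosen change of variable will reduce each $\tilde{\Delta} Z^{H,q}_{l,N}$ (respectively $\check{\Delta} Z^{H,q}_{l,N}$) to a multiple integral whose deterministic kernel is independent of $l$, taken against a translated Brownian motion with the same law as $B$.

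For the independence in part (1), I would first read off from (\ref{9aa-1}) that $\tilde{\Delta} Z^{H,q}_{l,N}=I_{q}(f_{l,N})$ with $f_{l,N}$ supported in the cube $A_{l}^{q}$, where $A_{l}:=(e_{l-1,N,\beta}+2^{-N},\,e_{l,N,\beta}+2^{-N}]$. Since $e_{l,N,\beta}-e_{l-1,N,\beta}=[2^{N^{\beta}}]/2^{N}$ depends only on $N$, the intervals $\{A_{l}\}_{l\in\mathcal{L}_{N}}$ are pairwise disjoint (the right endpoint of $A_{l}$ coincides with the excluded left endpoint of $A_{l+1}$) and share a common length. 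Each $\tilde{\Delta} Z^{H,q}_{l,N}$ is therefore measurable with respect to $\sigma(B(y)-B(y'):y,y'\in A_{l})$, and these $\sigma$-algebras are mutually independent by the independent-increment property of $B$, which yields the required independence.

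For the identical distribution in part (1), I would set $h_{l}:=e_{l-1,N,\beta}+2^{-N}$ and perform in (\ref{9aa-1}) the change of variables $y_{i}=\tilde{y}_{i}+h_{l}$ and $u=\tilde{u}+h_{l}$. The differences $u-y_{i}$ are preserved, while both integration domains become $l$-free: the $q$-cube becomes $(0,\,[2^{N^{\beta}}]/2^{N}]^{q}$ and the $\tilde{u}$-interval becomes $[\,[2^{N^{\beta}}]/2^{N}-2^{-N},\,[2^{N^{\beta}}]/2^{N}\,]$. Hence $\tilde{\Delta} Z^{H,q}_{l,N}$ rewrites as a multiple Wiener integral of order $q$ of a fixed deterministic kernel $g\in L^{2}(\mathbb{R}^{q})$ against the translated two-sided Brownian motion $B^{(l)}(\cdot):=B(\cdot+h_{l})-B(h_{l})$; since $B^{(l)}$ has the same law as $B$, we obtain the common distribution. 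Part (2) follows by the same strategy with the alternative shift $h_{l}':=e_{l,N,\beta}+2^{-N}$ tailored to (\ref{9aa-11}): the ambient cube $(-\infty,e_{l,N,\beta}+2^{-N}]^{q}$ becomes $(-\infty,0]^{q}$, the removed cube $A_{l}^{q}$ becomes $(-[2^{N^{\beta}}]/2^{N},0]^{q}$, and the $u$-domain becomes $[-2^{-N},0]$, all independent of $l$, so the translated Brownian motion $B(\cdot+h_{l}')-B(h_{l}')$ yields a common distribution for the $\check{\Delta} Z^{H,q}_{l,N}$.

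The only delicate point is the bookkeeping of the integration domains under the translations, particularly the set-theoretic complement in (\ref{9aa-11}) and the half-open nature of the $A_{l}$; once the correct shifts are identified (the left endpoint of $A_{l}$ for part (1) and its right endpoint for part (2)), everything reduces to the classical translation invariance and independence of increments of two-sided Brownian motion. Note that the independence argument of part (1) has no analogue for part (2): the supports appearing in (\ref{9aa-11}) all contain $(-\infty,e_{0,N,\beta}+2^{-N}]^{q}$, so the associated $\sigma$-algebras overlap substantially, consistent with the lemma only asserting identical distribution in this case.
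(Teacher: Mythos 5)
Your proposal is correct, and for the identical-distribution claims it follows essentially the same route as the paper: a translation change of variables that renders the kernel and all integration domains $l$-free, followed by the translation invariance (stationarity of increments) of the two-sided Brownian motion. The only substantive difference is in the independence part of point (1): the paper simply cites the reference \cite{A} for this, whereas you supply a self-contained argument, observing that $\tilde{\Delta} Z^{H,q}_{l,N}=I_q(f_{l,N})$ with $f_{l,N}$ supported in $A_l^q$ for pairwise disjoint intervals $A_l=\bigl(e_{l-1,N,\beta}+2^{-N},\,e_{l,N,\beta}+2^{-N}\bigr]$, so that each variable is measurable with respect to the $\sigma$-algebra generated by the white noise restricted to $A_l$ (this is the same Lemma~1.2.5 of \cite{N} that the paper invokes elsewhere, in the proof of Lemma~\ref{ll2}), and these $\sigma$-algebras are independent. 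This makes the lemma self-contained at essentially no extra cost. Your choice of shift ($e_{l-1,N,\beta}+2^{-N}$ for point (1), $e_{l,N,\beta}+2^{-N}$ for point (2)) differs from the paper's single shift $e_{l,N,\beta}$, but both produce $l$-free domains and the arguments are interchangeable; your closing remark on why no independence is claimed in point (2) is also accurate, since the supports $\overline{A_l^q}$ all contain a common unbounded region.
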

	\begin{proof} For point 1., the independence has been proven in \cite{A}. To prove that $\tilde{\Delta} Z ^{H,q}_{l, N}, l\in \mathcal{L}_{N}$, have the same distribution, we write (by $ = ^{(d)}$ we denote the equality in distribution), via (\ref{9aa-1}) and the change of variables $\tilde{u}= u-e_{l,N,\beta}$ and then $\tilde{y}_{i}= y_{i}- e_{l,N,\beta} $ for $i=1,..., q$,
		\begin{eqnarray*}
			\tilde{\Delta} Z ^{H,q}_{l, N}&=& c(H,q) \int_{\mathbb{R}^{q}}dB(y_{1})\ldots dB(y_{q}) 1_{ \left( e_{l-1, N,\beta}+ \frac{1}{2^{N}}, e_{l,N, \beta}+ \frac{1}{ 2 ^{N}}\right]^{q}}(y_{1},..,y_{q})\\
			&& \int_{0} ^{2 ^{-N}}du(u+ e_{l,N,\beta}-y_{1} )_{+}^{-\left( \frac{1}{2}+ \frac{1-H}{q}\right)}\ldots (u+e_{l,N,\beta}-y_{q} )_{+}^{-\left( \frac{1}{2}+ \frac{1-H}{q}\right)}\\
			&=&  c(H,q) \int_{\mathbb{R}^{q}}dB(y_{1}+ e_{l,N,\beta})\ldots dB(y_{q}+ e_{l, N,\beta}) 1_{ \left( \frac{ -\left[ 2 ^{N ^{\beta}}\right]}{2 ^{N}}+2 ^{-N}, 2 ^{-N}\right] ^{q} }(y_{1},..., y_{q})\\
			&& \int_{0} ^{2 ^{-N}}du (u-y_{1} )_{+}^{-\left( \frac{1}{2}+ \frac{1-H}{q}\right)}\ldots (u-y_{q} )_{+}^{-\left( \frac{1}{2}+ \frac{1-H}{q}\right)}\\
			&=^{(d)}&  c(H,q) \int_{\mathbb{R}^{q}}dB(y_{1})\ldots dB(y_{q}) 1_{ \left(  \frac{ -\left[ 2 ^{N ^{\beta}}\right]}{2 ^{N}}+2 ^{-N}, 2 ^{-N}\right] ^{q} }(y_{1},..., y_{q})\\
			&& \int_{0} ^{2 ^{-N}}du (u-y_{1} )_{+}^{-\left( \frac{1}{2}+ \frac{1-H}{q}\right)}\ldots (u-y_{q} )_{+}^{-\left( \frac{1}{2}+ \frac{1-H}{q}\right)}.
		\end{eqnarray*}
		The last equality is a consequence of the fact that the Brownian motion $B$ has stationary increments. A similar argument can be used to prove point 2.   \end{proof}
	
	We also need another auxiliary lemma. 
	
	
	\begin{lemma}\label{ll2}
		
		Let $\varphi $ and $\psi$ be two arbitrary functions belonging to $L ^{2}(\mathbb{R} ^{q})$, and let $D$ be an arbitrary Borel subset of the real line $\mathbb{R}$. We denote by $f$ and $g$ the two functions of $L ^{2} (\mathbb{R} ^{q})$ defined as: $f= \varphi 1_{D^q}$ and $g=\psi 1_{ \overline{D^{q}}}$,  where $\overline{D^{q}}=\mathbb{R} ^{q}\setminus D^q$. Then, for every  integer $a\geq 1$, we have
		\begin{equation}
			\label{ll2:eq1}
			\mathbf{E} I_{q}(f) ^{a} I _{q} (g)  =0.
		\end{equation} 
	\end{lemma}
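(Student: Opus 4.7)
The heart of the matter is that $f$ and $g$ are supported on complementary Borel subsets of $\mathbb{R}^q$: $\supp f \subseteq D^q$, while every point of $\supp g \subseteq \overline{D^q}$ has at least one coordinate in $D^c := \mathbb{R} \setminus D$. My plan is to exploit this by splitting the underlying two-sided Brownian motion $B$ into two mutually independent Brownian motions $B^1$ and $B^2$, defined by $B^1(A) = B(A \cap D)$ and $B^2(A) = B(A \cap D^c)$ for Borel $A \subset \mathbb{R}$, and setting $\mathcal{F}_1 = \sigma(B^1)$.

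First I would observe that since $f$ vanishes outside $D^q$, the multiple integral $I_q(f)$ involves only increments of $B$ on subsets of $D$ (a fact one justifies by approximating $\varphi$ on $D^q$ by off-diagonal simple functions and passing to the $L^2$-limit). Hence $I_q(f)$, and therefore $I_q(f)^a$ for every integer $a \geq 1$, is $\mathcal{F}_1$-measurable. Next I would decompose $\overline{D^q}$ as the disjoint union, over the proper subsets $S \subsetneq \{1, \ldots, q\}$, of $E_S := \{(y_1,\ldots,y_q) : y_i \in D \Leftrightarrow i \in S\}$, and write $g = \sum_S g_S$ with $g_S := \psi\, \mathbf{1}_{E_S}$. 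For each such proper $S$, approximating $g_S$ by finite linear combinations of off-diagonal simple kernels whose coordinate-$i$ factor is supported in $D$ when $i \in S$ and in $D^c$ otherwise, I would show that $I_q(g_S)$ factors into a product of an $\mathcal{F}_1$-measurable random variable and a multiple Wiener integral of positive order $q - |S| \geq 1$ with respect to the independent Brownian motion $B^2$. The latter factor has zero expectation conditionally on $\mathcal{F}_1$, so by the tower property
$$ \mathbf{E}\bigl[I_q(f)^a\, I_q(g_S)\bigr] = \mathbf{E}\Bigl[I_q(f)^a\, \mathbf{E}\bigl[I_q(g_S)\,\big|\,\mathcal{F}_1\bigr]\Bigr] = 0, $$
and summation over the finitely many proper $S$ yields the claim.

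The main obstacle I anticipate is making the factorization of $I_q(g_S)$ into a ``$B^1$-part times $B^2$-part'' fully rigorous. The natural remedy is to work first with tensor-product test functions of the indicated form, on which the product is immediate because distinct factors live on disjoint supports and no off-diagonal contractions occur between $B^1$-factors and $B^2$-factors, and then to extend to arbitrary $g_S$ by density and the $L^2$-isometry of multiple Wiener integrals. An alternative route that sidesteps this bookkeeping is to use the finite chaos expansion $I_q(f)^a = \sum_{k=0}^{qa} I_k(h_k)$ and exploit $\mathcal{F}_1$-measurability to conclude that each symmetric kernel $h_k$ is itself supported in $D^k$; orthogonality of distinct Wiener chaoses then gives $\mathbf{E}[I_q(f)^a I_q(g)] = q!\, \langle h_q, g\rangle_{L^2(\mathbb{R}^q)} = 0$ by the disjointness of $\supp h_q$ and $\supp g$.
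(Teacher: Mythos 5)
Your main argument is essentially the paper's own proof: both reduce to simple/tensor kernels whose coordinate factors lie in $D$ or its complement, use that $I_{q}(f)^a$ is measurable with respect to the sigma-algebra generated by the Brownian measure restricted to $D$ (Lemma 1.2.5 in \cite{N}), and kill the expectation via independence of the $D$- and $D^c$-parts together with the fact that the $D^c$-factor is centered --- your partition of $\overline{D^{q}}$ into the sets $E_S$ and the conditioning on $\mathcal{F}_1$ are just a repackaging of the paper's grouping of the $W(A_j)$ into the index sets $J$ and $\overline{J}$. The chaos-expansion alternative you sketch at the end is a genuinely different and arguably cleaner shortcut, and the support claim it needs for the kernels $h_k$ does hold, since iterating the product formula shows that tensor products and contractions of kernels supported in $D^{q}$ remain supported in the corresponding powers of $D$.
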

	\begin{proof} We can approximate  $g$ by simple functions belonging to $L ^{2} (\mathbb{R} ^{q})$ with supports included in $\overline{D^{q}}$. Thus, there exists a sequence $(g_{k}, k\geq 1)$ of such functions for which we have  $g_{k} \to _{k\to \infty} g$ in $ L ^{2} (\mathbb{R} ^{q})$. Moreover, for each $k\geq 1$, the simple function $g_k$ can be chosen in such a way that it can be expressed, for some  strictly positive integer $M_k$, as:
		\begin{equation}\label{29a-2}
			g_{k}= \sum_{ j_{1}^{(k)},..., j_{q}^{(k)}  =1}^{M_{k}} a_{j_{1}^{(k)},..., j_{q}^{(k)}}^{(k)} 1 _{ A_{j_{1}^{(k)}}^{(k)} \times \ldots \times A _{j_{q}^{(k)}}^{(k)}},
		\end{equation}
		where: 
		\begin{itemize}
			\item $A_{1}^{(k)},..., A_{M_{k}}^{(k)}$ are disjoint bounded Borel subsets of $\mathbb{R}$ such that each one of them is included in $D$, or included in $\overline{D}=\mathbb{R}\setminus D$, while $A_{j_{1}^{(k)}}^{(k)} \times....\times A _{j_{q}^{(k)}}^{(k)} \subseteq \overline{D ^{q}}=\mathbb{R}^q\setminus D^q$;
			\item the coefficients $a_{j_{1}^{(k)},..., j_{q}^{(k)}}^{(k)}$, $(j_{1}^{(k)},..., j_{q}^{(k)})\in\{1,\ldots, M_k\}^{q}$, are real numbers such that any one of them vanishes as soon as at least two of its indices $j_{1}^{(k)},..., j_{q}^{(k)}$ coincide i.e. when one has  $j_{m}^{(k)}= j_{n}^{(k)}$ for some $m,n\in \{1,..., q\}$ with  $m\ne n$. 
		\end{itemize}
		Then, we can derive from (\ref{29a-2}) and an elementary property of multiple Wiener integral,
		that 
		\begin{equation}\label{29a-2:bis}
			I_{q} (g_{k})=  \sum_{ j_{1}^{(k)},..., j_{q}^{(k)}  =1}^{M_{k}} a_{j_{1}^{(k)},..., j_{q}^{(k)}}^{(k)} W(A_{j_{1}^{(k)}})\ldots W(A_{j_{q}^{(k)}}), \hskip0.3cm k\geq 1,
		\end{equation}
		where $W$ denotes the Brownian random measure associated with the Brownian motion $B$. Next, observe that, in view of the isometry property of multiple Wiener integral (see (\ref{iso})) and Cauchy-Schwarz inequality, it turns out that, for deriving (\ref{ll2:eq1}), it is enough to show that, for any integers $a\geq 1$ and $ k\geq 1$, one has 
		\begin{equation}
			\label{3o-1}
			\mathbf{E}I_{q}(f) ^{a}I _{q} (g_{k})=0.
		\end{equation}
		By (\ref{29a-2:bis}) and linearity of expectation operator, to get (\ref{3o-1}) it suffices to prove that 
		\begin{equation}
			\label{3o-2}
			\mathbf{E} I_{q}(f) ^{a} W (A_{1})\ldots W(A_{q}) =0,
		\end{equation}
		where $A_{1},\ldots ,A_{q}$ are arbitrary disjoint bounded Borel subsets of $\mathbb{R}$ such that each one of them is included in $D$, or included in $\overline{D}$, while we have
		\begin{equation}
			\label{3o-3}
			A_{1} \times \ldots \times A_{q} \subseteq \overline{D^{q}}.
		\end{equation}
		Observe that (\ref{3o-3}) implies that there exists at least one index $j_{0}\in \{1,2,..., q\}$ such that $ A_{j_{0}}\subseteq \overline{D}$.  Let then $J$ be the nonempty set of indices defined as $J =\{ j\in \{ 1,2,..., q \}, A_{j}\subseteq \overline{D} \}$, and $\overline{J}= \{1,2,...,q\}\setminus J=\{ j\in \{ 1,2,..., q \}, A_{j}\subseteq D \}$ the complement of $J$. We denote by $X_1$ and $X_2$ the two random variables defined as $X_1=I_{q}(f) ^{a} \prod _{j\in \overline{J}}W(A_{j})$ and $X_2=\prod_{j\in J} W (A_{j})$, with the convention that $X_1=I_{q}(f) ^{a}$ when $\overline{J}$ is empty. We denote by ${\cal F}_{\,\overline{D}}$ (resp. ${\cal F}_{D}$) the sigma-algebra generated by all the random variables of the form $W(A)$, where $A$ is an arbitrary bounded Borel subset of $\overline{D}$ (resp. $D$). We know from a very classical property of the Brownian measure $W$ and from the fact that the sets $\overline{D}$ and $D$ are disjoint, that the two sigma-algebras ${\cal F}_{\,\overline{D}}$ and ${\cal F}_{D}$ are independent. Moreover, we know from the definitions of $X_1$ and $f$ and from Lemma 1.2.5 in \cite{N}, that $X_1$ is ${\cal F}_{D}$ measurable. Also, we know from the definition of $X_2$ that $X_2$ is ${\cal F}_{\,\overline{D}}$-measurable. Then, we can derive from the independence of the two sigma-algebras ${\cal F}_{\,\overline{D}}$ and ${\cal F}_{D}$ that $X_1$ and $X_2$ are two independent random variables. Thus, using their definitions we get that 
		\begin{equation}
			\label{3o-2:ter}
			\mathbf{E} I_{q}(f) ^{a} W (A_{1})\ldots W(A_{q}) = \mathbf{E}(X_1X_2)=\mathbf{E}(X_1) \mathbf{E}(X_2)=0,
		\end{equation}
		where the last equality is due to the fact that $\mathbf{E}(X_2)=0$, which can be derived from the independence of the centered Gaussian random variables $W (A_{j})$, $j\in J$, the sets $A_j$ being disjoint.
		
		Finally, it clearly follows from (\ref{3o-2:ter}) that (\ref{3o-2}) is satisfied.
	\end{proof}
	%

	Let us now recall the following result from \cite{A}. 
	
	\begin{prop}\label{pp1}
		For every $N\geq 1$ large enough and $ l \in \mathcal{L}_{N}$, 
		\begin{equation}
			\label{8aa-6}
			\mathbf{E} \left( 	\check{\Delta} Z ^{H,q}_{l, N}\right) ^{2} \leq C 2 ^{-2NH} 2 ^{N ^{\beta}\frac{2H-2}{q}}.
		\end{equation}
	\end{prop}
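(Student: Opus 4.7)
The plan is to apply the isometry of multiple Wiener--It\^{o} integrals to reduce the estimate to an $L^2$ bound on the kernel, namely
\[
\mathbf{E}\bigl(\check{\Delta} Z^{H,q}_{l,N}\bigr)^2 \leq c(H,q)^2\, q!\, \|K\|_{L^2(\mathbb{R}^q)}^2,
\]
where $K$ is the kernel appearing in (\ref{9aa-11}), and then exploit the specific structure of its support. Writing $D=(e_{l-1,N,\beta}+2^{-N},e_{l,N,\beta}+2^{-N}]$, which is an interval of length $[2^{N^\beta}]/2^N$, the support $\overline{D^q}=(-\infty, e_{l,N,\beta}+2^{-N}]^q\setminus D^q$ is partitioned according to the non-empty subset $J=\{i:\,y_i\notin D\}$ of ``bad'' coordinates. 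The crucial point is that for $i\in J$ and $u,v\in[e_{l,N,\beta},e_{l,N,\beta}+2^{-N}]$ one has $u-y_i,\,v-y_i\geq a_N := ([2^{N^\beta}]-1)/2^N$, which is of order $2^{N^\beta-N}$ and, for $N$ large, far larger than $2^{-N}$.

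After expanding the square and using Fubini, the contribution of each $J$ factorizes into a product over $i\in J$ of integrals of the form $\int_{-\infty}^{e_{l-1,N,\beta}+2^{-N}}(u-y)^{-\alpha}(v-y)^{-\alpha}\,dy$ (with $\alpha=\tfrac12+\tfrac{1-H}{q}$, so that $1-2\alpha=(2H-2)/q$) and a product over $i\notin J$ of integrals $\int_D (u-y)_+^{-\alpha}(v-y)_+^{-\alpha}\,dy$. For $i\in J$, the change of variable $w=u-y$, together with $w\geq a_N\gg |u-v|$, yields a bound of order $a_N^{1-2\alpha}\asymp 2^{(N^\beta-N)(2H-2)/q}$. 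For $i\notin J$, enlarging the domain to $(-\infty, u\wedge v]$ gives the standard fBm-type identity, bounded by $C|u-v|^{(2H-2)/q}$. Integrating the resulting power of $|u-v|$ over the square of side $2^{-N}$ (convergent since $H>1/2$ implies $2(1-H)/q<1$) produces an extra factor of order $(2^{-N})^{2+|J^c|(2H-2)/q}$.

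Multiplying these contributions, a telescoping of the exponents shows that the $J$-th term is of order $2^{-2NH}\cdot 2^{-2N^\beta|J|(1-H)/q}$; summing over $|J|\in\{1,\ldots,q\}$ the dominant term corresponds to $|J|=1$, producing exactly $C\,2^{-2NH}\,2^{N^\beta(2H-2)/q}$ as claimed. The main obstacle is the careful bookkeeping on the mixed strata: one must justify replacing the truncations $(u-y)_+$ by $u-y$ where it is legitimate, and handle both orderings $u\leq v$ and $v\leq u$ when estimating $(w+v-u)^{-\alpha}$ by a constant multiple of $w^{-\alpha}$. This last step is where the hypothesis ``$N$ large enough'' is used, via the requirement $a_N\geq 2\cdot 2^{-N}$ that makes the shift $v-u$ negligible compared with $w$ throughout the integration range.
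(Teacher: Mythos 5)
Your argument is correct, but it is worth pointing out that the paper itself does not prove Proposition \ref{pp1} at all: it is explicitly \emph{recalled} from the reference \cite{A} (Ayache, 2020), so there is no in-paper proof to compare against. Your proposal therefore supplies a self-contained derivation where the paper only offers a citation, and the derivation checks out. The isometry reduces the claim to an $L^2$ bound on the (already symmetric) kernel; the stratification of $\overline{D^q}$ by the nonempty set $J$ of coordinates lying in $(-\infty,\, e_{l-1,N,\beta}+2^{-N}]$ is a genuine partition; the ``bad'' one-dimensional integrals are bounded by $C\,a_N^{(2H-2)/q}$ with $a_N=([2^{N^\beta}]-1)2^{-N}$ using $2\alpha>1$; the ``good'' ones by $C|u-v|^{(2H-2)/q}$ using $\alpha<1$ (which holds because $H>1/2$ forces $(1-H)/q<1/2$); and the final $du\,dv$ integration converges because $(q-|J|)\,2(1-H)/q\le 2(1-H)<1$. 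The exponent bookkeeping indeed telescopes to $2^{-2NH}2^{N^\beta|J|(2H-2)/q}$ for each stratum, and since $2H-2<0$ the stratum $|J|=1$ dominates, yielding exactly (\ref{8aa-6}); the sum over the $\binom{q}{|J|}$ strata only affects the constant. The two points you flag as delicate are handled adequately: the comparison $(w+v-u)^{-\alpha}\le C\,w^{-\alpha}$ in the unfavorable ordering needs only $|u-v|\le 2^{-N}\le a_N/2$, which is where ``$N$ large enough'' enters (alternatively one can invoke the $u\leftrightarrow v$ symmetry of the expanded square), and enlarging the domain of the good integrals to $(-\infty,u\wedge v]$ only increases them since the integrands are nonnegative. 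I see no gap.
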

	
	Below, we deduce the asymptotic behavior of the second and fourth moments of the dominant parts denoted by  $\tilde{\Delta}Z ^{H,q}_{l, N} $. 
	\begin{prop}\label{pp2}
		For all $N\geq 1$ large enough and for every $l\in \mathcal{L}_{N}$,  we have 
		\begin{equation}
			\label{8aa-1}
			\left| 2 ^{2HN} \mathbf{E} \left(\tilde{\Delta}Z ^{H,q}_{l, N} \right) ^{2}-1\right| \leq  C 2 ^{N ^{\beta}\frac{2H-2}{q}},
		\end{equation}
		which implies that
		\begin{equation*}
			2 ^{2NH} \mathbf{E} \left(\tilde{\Delta}Z ^{H,q}_{l, N} \right) ^{2} \to _{N\to \infty} 1.
		\end{equation*}
		Also, we have
		\begin{equation}\label{8aa-2}
			\left| 	2 ^{4NH} \mathbf{E} \left(\tilde{\Delta}Z ^{H,q}_{l, N} \right) ^{4}- \mathbf{E}\vert Z ^{H,q}_{1} \vert ^{4} \right| \leq C 2 ^{N ^{\beta}\frac{2H-2}{q}},
		\end{equation}
		which entails that
		\begin{equation*}
			2 ^{4NH} \mathbf{E} \left(\tilde{\Delta}Z ^{H,q}_{l, N} \right) ^{4} \to _{N\to \infty} \mathbf{E} \vert Z ^{H,q}_{1} \vert ^{4}. 
		\end{equation*}
	\end{prop}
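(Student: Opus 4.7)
The plan is to exploit the decomposition $\Delta Z^{H,q}_{l,N}=\tilde{\Delta}Z^{H,q}_{l,N}+\check{\Delta}Z^{H,q}_{l,N}$ together with two already-available ingredients: the exact scaling identity (\ref{29a-4}) applied to the whole increment (which gives $\mathbf{E}|\Delta Z^{H,q}_{l,N}|^p=2^{-NHp}\mathbf{E}|Z^{H,q}_1|^p$), and the small upper bound (\ref{8aa-6}) for $\mathbf{E}(\check{\Delta}Z^{H,q}_{l,N})^2$ supplied by Proposition \ref{pp1}. The kernels in (\ref{9aa-1}) and (\ref{9aa-11}) are of the form $\varphi\,1_{D^q}$ and $\psi\,1_{\overline{D^q}}$, so Lemma \ref{ll2} is tailor-made to annihilate certain mixed moments.

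\emph{For (\ref{8aa-1}):} take $p=2$ in (\ref{29a-4}) to get $\mathbf{E}(\Delta Z^{H,q}_{l,N})^2=2^{-2NH}$. Expanding the square,
\begin{equation*}
\mathbf{E}(\Delta Z^{H,q}_{l,N})^2=\mathbf{E}(\tilde{\Delta}Z^{H,q}_{l,N})^2+2\mathbf{E}\bigl[\tilde{\Delta}Z^{H,q}_{l,N}\,\check{\Delta}Z^{H,q}_{l,N}\bigr]+\mathbf{E}(\check{\Delta}Z^{H,q}_{l,N})^2,
\end{equation*}
and Lemma \ref{ll2} with $a=1$ kills the cross term. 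Rearranging and inserting the estimate (\ref{8aa-6}) yields (\ref{8aa-1}) at once, since $2H-2<0$ makes the right-hand side tend to $0$.

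\emph{For (\ref{8aa-2}):} take $p=4$ in (\ref{29a-4}) to get $\mathbf{E}|\Delta Z^{H,q}_{l,N}|^4=2^{-4NH}\mathbf{E}|Z^{H,q}_1|^4$. Expand
\begin{equation*}
\bigl(\tilde{\Delta}+\check{\Delta}\bigr)^4=\tilde{\Delta}^4+4\tilde{\Delta}^3\check{\Delta}+6\tilde{\Delta}^2\check{\Delta}^2+4\tilde{\Delta}\check{\Delta}^3+\check{\Delta}^4.
\end{equation*}
Lemma \ref{ll2} with $a=3$ kills $\mathbf{E}\tilde{\Delta}^3\check{\Delta}$. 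For the remaining mixed moments and $\mathbf{E}\check{\Delta}^4$ use Cauchy--Schwarz combined with hypercontractivity on the $q$-th Wiener chaos (see, e.g., \cite{NP-book}), which provides $\mathbf{E}|X|^{2k}\le C_{k,q}\bigl(\mathbf{E}X^2\bigr)^{k}$ for any $X$ in that chaos. Using this, together with (\ref{8aa-6}) and the orthogonality bound $\mathbf{E}(\tilde{\Delta})^2\le\mathbf{E}(\Delta)^2=2^{-2NH}$ just obtained, each of those terms is controlled by $C\,2^{-4NH}\,2^{k N^{\beta}(2H-2)/q}$ for some integer $k\ge 1$; since $2H-2<0$, every such factor is dominated by $2^{N^{\beta}(2H-2)/q}$. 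Summing, $|\mathbf{E}|\Delta|^4-\mathbf{E}\tilde{\Delta}^4|\le C\,2^{-4NH}\,2^{N^{\beta}(2H-2)/q}$, and multiplication by $2^{4NH}$ produces (\ref{8aa-2}).

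\emph{Main obstacle:} Lemma \ref{ll2} only handles one-sided moments $\mathbf{E}I_q(f)^{a}I_q(g)$ because $\overline{D^q}$ is not a Cartesian product; the cross terms $\mathbf{E}\tilde{\Delta}^2\check{\Delta}^2$ and $\mathbf{E}\tilde{\Delta}\check{\Delta}^3$ therefore cannot be killed exactly, and the subtlety is to convert the second-moment control in Proposition \ref{pp1} into useful bounds on higher moments of $\check{\Delta}$ through the chaos hypercontractivity, so that these residual terms remain of the order claimed in (\ref{8aa-2}).
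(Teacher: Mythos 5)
Your proposal is correct and follows essentially the same route as the paper's own proof: the exact scaling identity (\ref{29a-4}) for the full increment $\Delta Z^{H,q}_{l,N}$, Lemma \ref{ll2} to annihilate the one-sided cross moments $\mathbf{E}\bigl(\tilde{\Delta}Z^{H,q}_{l,N}\bigr)^{a}\check{\Delta}Z^{H,q}_{l,N}$, and Proposition \ref{pp1} combined with Cauchy--Schwarz and hypercontractivity to control the surviving mixed moments. The only cosmetic discrepancy is that for $\mathbf{E}\,\tilde{\Delta}Z^{H,q}_{l,N}\bigl(\check{\Delta}Z^{H,q}_{l,N}\bigr)^{3}$ the exponent you call an integer $k$ actually comes out as $3/2$, which changes nothing since $2H-2<0$.
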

	\begin{proof}To prove (\ref{8aa-1}), we notice that for $N\geq 1$ and $l\in \mathcal{L}_{N}$,  by (\ref{29a-4}),
		\begin{equation*}
			\mathbf{E} \vert \Delta Z ^{H,q} _{l, N} \vert ^{2}= 2 ^{-2NH}
		\end{equation*}
		and by Lemma \ref{ll2}, 
		\begin{eqnarray}
			\mathbf{E}\vert \Delta Z ^{H,q} _{l, N} \vert ^{2}&=&\mathbf{E} \vert \tilde{\Delta} Z ^{H,q}_{l, N} \vert ^{2}+ \mathbf{E} \vert \check{\Delta} Z ^{H,q}_{l, N} \vert ^{2}+ 2\mathbf{E}  \tilde{\Delta} Z ^{H,q}_{l, N} \check{\Delta} Z ^{H,q}_{l, N}\nonumber \\
			&=&\mathbf{E} \vert \tilde{\Delta} Z ^{H,q}_{l, N} \vert ^{2}+ \mathbf{E} \vert \check{\Delta} Z ^{H,q}_{l, N} \vert ^{2}.\label{8aa-8}
		\end{eqnarray}
		On the other hand, by Proposition \ref{pp1},
		\begin{equation*}
			2 ^{2NH}  \mathbf{E}  \left(\check{\Delta}Z ^{H,q}_{l, N} \right) ^{2}\leq C 2 ^{N ^{\beta}\frac{2H-2}{q}}\to _{N \to \infty}0. 
		\end{equation*}
		Relation (\ref{8aa-8}) implies
		\begin{eqnarray*}
			2 ^{2HN} \mathbf{E} \left(\tilde{\Delta}Z ^{H,q}_{l, N} \right) ^{2}-1&=& \left( 2 ^{2HN} \mathbf{E} \vert \Delta Z ^{H,q} _{l, N}\vert ^{2}-1 \right) -  2 ^{2HN}  \mathbf{E} \left(\check{\Delta}Z ^{H,q}_{l, N} \right) ^{2}\\
			&=&- 2 ^{2HN} \mathbf{E} \left(\check{\Delta}Z ^{H,q}_{l, N} \right) ^{2}
		\end{eqnarray*}
		hence
		\begin{equation*}
			\left| 2 ^{2HN} \mathbf{E} \left(\tilde{\Delta}Z ^{H,q}_{l, N} \right) ^{2}-1\right| \leq C 2 ^{N ^{\beta}\frac{2H-2}{q}}.
		\end{equation*}
		So, (\ref{8aa-1}) is obtained.  Let us now show (\ref{8aa-2}).
		
		By (\ref{29a-4}), we have
		\begin{equation*}
			\mathbf{E} \left(\Delta Z ^{H,q} _{l, N} \right) ^{4}= 2 ^{-4HN} \mathbf{E} \vert Z ^{H,q}_{1}\vert ^{4},
		\end{equation*}
		for every $ l \in \mathcal{L}_{N}$. On the other hand, by Lemma \ref{ll2} 
		\begin{eqnarray*}
			\mathbf{E} \left(\Delta Z ^{H,q}_{l, N} \right) ^{4}&=& \mathbf{E} \left( \tilde{\Delta} Z ^{H,q}_{l, N}\right) ^{4} + \mathbf{E}\left( \check{\Delta}Z ^{H,q}_{l, N}\right) ^{4} \\
			&&+ 6 \mathbf{E} \left(  \tilde{\Delta}Z ^{H,q}_{l, N}\right) ^{2}   \left(  \check{\Delta}Z ^{H,q}_{l, N}\right) ^{2}+   4\mathbf{E}  \tilde{\Delta}Z ^{H,q}_{l, N} \left(  \check{\Delta}Z ^{H,q}_{l, N}\right) ^{3}. 
		\end{eqnarray*}
		Using the hypercontractivity property (\ref{hyper}) and Proposition \ref{pp1}, we obtains that 
		\begin{eqnarray}\label{8aa-3}
			\mathbf{E}\left( \check{\Delta}Z ^{H,q}_{l, N}\right) ^{4} &\leq & C \left( \mathbf{E}\left( \check{\Delta}Z ^{H,q}_{l, N}\right) ^{2}\right) ^{2} \leq C 2 ^{-4HN}2 ^{N ^{\beta} \frac{4H-4}{q}}.
		\end{eqnarray}
		On the other hand, Cauchy-Schwarz's inequality, (\ref{hyper}) and Proposition \ref{pp1} imply that
		\begin{eqnarray}\label{8aa-4}
			&& \mathbf{E}\left(  \tilde{\Delta}Z ^{H,q}_{l, N}\right) ^{2}   \left(  \check{\Delta}Z ^{H,q}_{l, N}\right) ^{2}\leq        \left(  \mathbf{E} \left(  \tilde{\Delta}Z ^{H,q}_{l, N}\right) ^{4}   \right) ^{\frac{1}{2}}     \left(  \mathbf{E}\left(  \check{\Delta}Z ^{H,q}_{l, N}\right) ^{4}   \right) ^{\frac{1}{2}}    \nonumber\\
			&&\leq     C     \mathbf{E} \left(  \tilde{\Delta}Z ^{H,q}_{l, N}\right) ^{2}  \mathbf{E} \left(   \check{\Delta}Z ^{H,q}_{l, N}\right) ^{2}\leq C    2 ^{-4HN}2 ^{N ^{\beta} \frac{2H-2}{q}}
		\end{eqnarray}
		and
		\begin{eqnarray}
			&&\mathbf{E}  \tilde{\Delta}Z ^{H,q}_{l, N} \left(  \check{\Delta}Z ^{H,q}_{l, N}\right) ^{3}\leq  \left(  \mathbf{E} \left(  \tilde{\Delta}Z ^{H,q}_{l, N}\right) ^{2}   \right) ^{\frac{1}{2}}     \left(  \mathbf{E} \left(  \check{\Delta}Z ^{H,q}_{l, N}\right) ^{6}   \right) ^{\frac{1}{2}}    \nonumber\\
			&&\leq C \left(  \mathbf{E} \left(  \tilde{\Delta}Z ^{H,q}_{l, N}\right) ^{2}   \right) ^{\frac{1}{2}}    \left(  \mathbf{E} \left(  \check{\Delta}Z ^{H,q}_{l, N}\right) ^{2}   \right) ^{\frac{3}{2}}   \leq  C 2 ^{-4HN}2 ^{N ^{\beta} \frac{3H-3}{q}}.\label{3o-4}
		\end{eqnarray}
		Finally, combining (\ref{8aa-3}), (\ref{8aa-4}) and (\ref{3o-4}), we get (\ref{8aa-2}). \end{proof}

	\section{Modified quadratic variation of the Hermite process}\label{sec4}
	
	We define the (centered) modified quadratic variation of the Hermite process, constructed by using not all the increments of the Hermite process over the consecutive dyadic numbers of arbitrary level $N$ of the unit interval $[0,1]$, but only the increments  with nice properties introduced in Section \ref{sec3}.  For any integer $N\geq 1$, let us set
	\begin{eqnarray}
		V_{N} &=& \frac{ 2 ^{2HN}}{ \sqrt{ \vert \mathcal{L}_{N, \gamma}\vert}}\sum_{l\in \mathcal{L}_{N, \gamma}} \left[ \left( Z ^{H,q}_{ e_{l,N, \beta}+2 ^{-N}}-  Z ^{H,q}_{ e_{l,N, \beta}}\right) ^{2}-   \mathbf{E}\left( Z ^{H,q}_{ e_{l,N, \beta}+2 ^{-N}}-  Z ^{H,q}_{ e_{l,N, \beta}}\right) ^{2}\right]\nonumber \\
		&=&\frac{ 2 ^{2HN}}{ \sqrt{ \vert \mathcal{L}_{N, \gamma}\vert}}\sum_{l\in \mathcal{L}_{N, \gamma}} \left[( \Delta Z^{H,q}_{l,N}) ^{2}- \mathbf{E}( \Delta Z^{H,q}_{l,N} )^{2}\right],\label{vn}
	\end{eqnarray}
	where $\vert \mathcal{L}_{N, \gamma}\vert$ is the cardinality of the set $ \mathcal{L}_{N, \gamma}$ defined in (\ref{lng}), and $ \Delta Z^{H,q}_{l,N}, e_{l,N, \beta}$  are given by (\ref{deltax}) and (\ref{e}), respectively.
	
	We will show that the sequence $( V_{N}, N\geq 1)$ satisfies a Central Limit Theorem (CLT) and we will deduce the rate of convergence under the Wasserstein distance. To do this, we decompose $V_{N}$ as follows: 
	\begin{equation}\label{deco}
		V_{N}= V_{N,1}+ V_{N, 2}+ V_{N,3},
	\end{equation}
	where
	\begin{equation}
		\label{vn1}
		V_{N,1}= \frac{ 2 ^{2HN}}{ \sqrt{ \vert \mathcal{L}_{N, \gamma}\vert}}\sum_{l\in \mathcal{L}_{N, \gamma}}  \left[( \tilde{\Delta} Z^{H,q}_{l,N}) ^{2}- \mathbf{E}( \tilde{\Delta} Z^{H,q}_{l,N} )^{2}\right],
	\end{equation}
	
	\begin{equation}
		\label{vn2}
		V_{N,2}= \frac{ 2 ^{2HN}}{ \sqrt{ \vert \mathcal{L}_{N, \gamma}\vert}}\sum_{l\in \mathcal{L}_{N, \gamma}}  \left[( \check{\Delta} Z^{H,q}_{l,N}) ^{2}- \mathbf{E}( \check{\Delta} Z^{H,q}_{l,N} )^{2}\right],
	\end{equation}
	and
	\begin{equation}
		\label{vn3}
		V_{N,3}= 2 \frac{ 2 ^{2HN}}{ \sqrt{ \vert \mathcal{L}_{N, \gamma}\vert}}\sum_{l\in \mathcal{L}_{N, \gamma}}   \tilde{\Delta} Z^{H,q}_{l,N} \check{\Delta} Z^{H,q}_{l,N}.
	\end{equation}
	The strategy to derive the CLT for $V_{N}$ is the following: we first prove that the two last terms in the right-hand side of the equality (\ref{deco}), namely $V_{N.2}$ and $ V_{N,3}$, converge to zero in $ L^{1} (\Omega)$ as $ N\to \infty$ (these two terms will be called the remainders), then using tools from the Stein-Malliavin calculus,  we show that the sequence $V_{N,1}$ converges in distribution, as $ N\to \infty$, to a Gaussian random variable. 
	Let us first deal with the remainders.

	\subsection{The remainders}
	The goal of this subsection is to  show that $V_{N,2}$ and $V_{N,3}$ converge to zero in $ L^{1} (\Omega)$ as $N\to \infty$.

	\begin{prop}\label{pp3}
		Consider the sequence $(V_{N,2}, N\geq 1)$ given by (\ref{vn2}). Then, for $N$ sufficiently large,
		\begin{equation*}
			\mathbf{E}\vert V_{N,2} \vert \leq C  2 ^{\frac{ N ^{\gamma}}{2}+ N ^{\beta} \frac{2H-2}{q} }.
		\end{equation*}
		In particular, $V_{N,2}$ converges to zero in $ L^{1}(\Omega)$ as $N\to \infty$. 
	\end{prop}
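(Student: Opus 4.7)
The plan is direct: bound $\mathbf{E}|V_{N,2}|$ term-by-term via the triangle inequality, then feed in the second moment estimate of Proposition \ref{pp1}. Since $(\check{\Delta} Z^{H,q}_{l,N})^2$ is nonnegative, one has the elementary inequality
$$\mathbf{E}\bigl|(\check{\Delta} Z^{H,q}_{l,N})^2 - \mathbf{E}(\check{\Delta} Z^{H,q}_{l,N})^2\bigr| \leq 2\,\mathbf{E}(\check{\Delta} Z^{H,q}_{l,N})^2,$$
so applying the triangle inequality to (\ref{vn2}) yields
$$\mathbf{E}|V_{N,2}| \leq \frac{2\cdot 2^{2HN}}{\sqrt{|\mathcal{L}_{N,\gamma}|}} \sum_{l\in\mathcal{L}_{N,\gamma}} \mathbf{E}(\check{\Delta} Z^{H,q}_{l,N})^2.$$

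Next, by Proposition \ref{pp1}, each summand is bounded uniformly in $l$ by $C\,2^{-2NH}2^{N^\beta(2H-2)/q}$ (one could alternatively invoke point 2 of Lemma \ref{ll1} to note that all these second moments coincide). Substituting this bound, the factor $2^{2HN}$ cancels with $2^{-2HN}$ and the sum of $|\mathcal{L}_{N,\gamma}|$ equal terms divided by $\sqrt{|\mathcal{L}_{N,\gamma}|}$ leaves a factor of $\sqrt{|\mathcal{L}_{N,\gamma}|}$, producing
$$\mathbf{E}|V_{N,2}| \leq 2C\sqrt{|\mathcal{L}_{N,\gamma}|}\,2^{N^\beta(2H-2)/q}.$$
The cardinality bound (\ref{8aa-7}), $|\mathcal{L}_{N,\gamma}|\leq 2^{N^\gamma}$, then gives exactly the claimed inequality (with constant renamed).

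For the $L^1(\Omega)$ convergence, observe that $H\in(1/2,1)$ ensures $(2H-2)/q<0$, and the assumption $\gamma<\beta$ (built into the definition (\ref{lng})) ensures that $N^\beta$ outgrows $N^\gamma$, so $N^\gamma/2 + N^\beta(2H-2)/q \to -\infty$ and hence the right-hand side tends to $0$. There is no real obstacle in this argument: all the analytic work is concentrated in Proposition \ref{pp1}, and the condition $\gamma<\beta$ is precisely what guarantees that the "entropy" factor $\sqrt{|\mathcal{L}_{N,\gamma}|}$ introduced by summing $|\mathcal{L}_{N,\gamma}|$ small terms and renormalizing is still dominated by the sharp decay of the negligible part $\check{\Delta} Z^{H,q}_{l,N}$.
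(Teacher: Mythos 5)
Your proof is correct and follows essentially the same route as the paper's: triangle inequality plus the elementary bound $\mathbf{E}|X-\mathbf{E}X|\le 2\mathbf{E}X$ for $X\ge 0$, then the second-moment estimate of Proposition \ref{pp1} and the cardinality bound (\ref{8aa-7}), with the condition $\gamma<\beta$ giving the convergence to zero. No discrepancies to report.
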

	\begin{proof} By using the inequalities (\ref{8aa-6})  and (\ref{8aa-7}), we obtain that 
		\begin{eqnarray*}
			\mathbf{E}\vert V_{N,2}\vert &\leq & C  \frac{ 2 ^{2HN}}{ \sqrt{ \vert \mathcal{L}_{N, \gamma}\vert}}\sum_{l\in \mathcal{L}_{N, \gamma}}\mathbf{E}( \check{\Delta} Z^{H,q}_{l,N}) ^{2}\leq C \sqrt{ \vert \mathcal{L}_{N, \gamma}\vert}2 ^{ N ^{\beta}\frac{2H-2}{q}}\\
			&\leq &C 2 ^{\frac{ N ^{\gamma}}{2}+ N ^{\beta} \frac{2H-2}{q} }.
		\end{eqnarray*}
		Since $\gamma <\beta$ (see the assumption (\ref{lng})), we deduce that $ \mathbf{E}\vert V_{N,2} \vert \to _{N\to \infty}0$.   \end{proof}

	Regarding the summand $V_{N,3}$, we have the following result. 
	\begin{prop}\label{pp4}
		Consider the sequence $(V_{N,3}, N\geq 1)$ given by (\ref{vn3}). Then, for $N$ large enough, 
		\begin{equation*}
			\mathbf{E}\vert V_{N,3} \vert \leq C  2 ^{\frac{ N ^{\gamma}}{2}+ N ^{\beta} \frac{H-1}{q} }.
		\end{equation*}
		In particular, $V_{N,3}$ converges to zero in $ L^{1}(\Omega)$ as $N\to \infty$. 
	\end{prop}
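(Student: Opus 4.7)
The plan is to bound $\mathbf{E}|V_{N,3}|$ by moving the absolute value inside the sum, applying Cauchy--Schwarz to each cross term $\tilde{\Delta} Z^{H,q}_{l,N}\check{\Delta} Z^{H,q}_{l,N}$, and then using the second moment estimates already established. More precisely, I would start from
\[
\mathbf{E}|V_{N,3}|\;\le\; 2\,\frac{2^{2HN}}{\sqrt{|\mathcal{L}_{N,\gamma}|}}\sum_{l\in\mathcal{L}_{N,\gamma}}\bigl(\mathbf{E}(\tilde{\Delta} Z^{H,q}_{l,N})^{2}\bigr)^{1/2}\bigl(\mathbf{E}(\check{\Delta} Z^{H,q}_{l,N})^{2}\bigr)^{1/2}.
\]

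Next I would insert the two moment bounds. By Proposition~\ref{pp2}, one has $\mathbf{E}(\tilde{\Delta} Z^{H,q}_{l,N})^{2}\le C\,2^{-2HN}$ for $N$ large enough and every $l\in\mathcal{L}_{N,\gamma}$ (this also follows directly from (\ref{8aa-8}), since $\mathbf{E}|\Delta Z^{H,q}_{l,N}|^{2}=2^{-2HN}$ by (\ref{29a-4})). By Proposition~\ref{pp1}, $\mathbf{E}(\check{\Delta} Z^{H,q}_{l,N})^{2}\le C\,2^{-2HN}2^{N^{\beta}\frac{2H-2}{q}}$. Multiplying the two square roots gives a per-term bound of order $2^{-2HN}\,2^{N^{\beta}\frac{H-1}{q}}$, uniformly in $l$.

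Summing over $\mathcal{L}_{N,\gamma}$ introduces a factor $|\mathcal{L}_{N,\gamma}|$, which combined with the prefactor $2^{2HN}/\sqrt{|\mathcal{L}_{N,\gamma}|}$ yields a residual factor $\sqrt{|\mathcal{L}_{N,\gamma}|}$. Applying the cardinality estimate (\ref{8aa-7}), $|\mathcal{L}_{N,\gamma}|\le 2^{N^{\gamma}}$, this becomes at most $2^{N^{\gamma}/2}$, and we obtain
\[
\mathbf{E}|V_{N,3}|\;\le\; C\,2^{\frac{N^{\gamma}}{2}+N^{\beta}\frac{H-1}{q}},
\]
which is exactly the claimed inequality. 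Finally, since $H<1$ so that the coefficient $(H-1)/q$ is strictly negative, and since $\gamma<\beta$ (imposed in (\ref{lng})), the polynomial $N^{\beta}$ dominates $N^{\gamma}$ and the exponent tends to $-\infty$, giving $\mathbf{E}|V_{N,3}|\to 0$.

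There is no real obstacle here: the argument is structurally identical to the proof of Proposition~\ref{pp3}, with the only genuine difference being the use of Cauchy--Schwarz to handle the mixed product, which loses a factor of $1/2$ in the exponent of the $\check{\Delta}$ contribution. The inequality in Proposition~\ref{pp4} is therefore slightly weaker (exponent $(H-1)/q$ instead of $(2H-2)/q$), but still negative, so convergence to zero in $L^{1}(\Omega)$ follows by the same reasoning as in Proposition~\ref{pp3}.
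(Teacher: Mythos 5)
Your proposal is correct and follows essentially the same route as the paper: triangle inequality, Cauchy--Schwarz on each cross term, the bound $\mathbf{E}(\tilde{\Delta} Z^{H,q}_{l,N})^{2}\le 2^{-2HN}$ from (\ref{8aa-8}), the estimate (\ref{8aa-6}) for the negligible part, and the cardinality bound (\ref{8aa-7}), with the conclusion drawn from $\gamma<\beta$. No gaps.
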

	\begin{proof}By (\ref{8aa-8}), we clearly have for $l\in \mathcal{L}_{N}$,
		\begin{eqnarray*}
			\mathbf{E} \left( \tilde{\Delta} Z ^{H,q}_{l, N} \right) ^{2} \leq   2 ^{-2HN}.
		\end{eqnarray*}
		By the above inequality, (\ref{8aa-7}), Cauchy-Schwarz's inequality and (\ref{8aa-6}), we get 
		\begin{eqnarray*}
			\mathbf{E} \vert V_{N,3}\vert &\leq &  \frac{ 2 ^{2HN}}{ \sqrt{ \vert \mathcal{L}_{N, \gamma}\vert}}\sum_{l\in \mathcal{L}_{N, \gamma}} \left( 	\mathbf{E} \left( \tilde{\Delta} Z ^{H,q}_{l, N} \right) ^{2}\right) ^{\frac{1}{2}} \left( 	\mathbf{E} \left( \check{\Delta} Z ^{H,q}_{l, N} \right) ^{2}\right) ^{\frac{1}{2}}\\
			&\leq & C \sqrt{ \vert \mathcal{L}_{N, \gamma}\vert} 2 ^{ N ^{\beta}\frac{H-1}{q}}\leq C 2 ^{\frac{ N ^{\gamma}}{2}+ N ^{\beta} \frac{H-1}{q} }.
		\end{eqnarray*} 
		Again, the  assumption $\gamma <\beta$ (see (\ref{lng})) ensures that $ \mathbf{E} \vert V_{N,3}\vert  \to _{ N \to \infty} 0$. \end{proof}

	\subsection{The main term: Central Limit Theorem}
	The goal of this subsection is to show that the sequence $(V_{N,1}, N\geq 1)$, given by (\ref{vn1}), converges in distribution to a centered Gaussian variable, and to estimate the rate of convergence for the Wasserstein distance via Stein-Malliavin calculus. 
	
	First, let us calculate the asymptotic variance of $V_{N,1}$. Notice that ${\rm Var}(V_{N,1})=\mathbf{E} \vert V_{N,1} \vert ^{2}$ since $V_{N,1}$ is centered. We have, for every $N\geq 1$, 
	\begin{eqnarray*}
		\mathbf{E} \vert V_{N,1} \vert ^{2} &=& \frac{ 2 ^{4HN}}{\vert \mathcal{L}_{ N, \gamma}\vert}  \sum_{k,l\in \mathcal{L}_{N, \gamma}}\mathbf{E}\left[ \left( \left( \tilde{\Delta} Z ^{H,q}_{l, N} \right) ^{2}-\mathbf{E}\left( \tilde{\Delta} Z ^{H,q}_{l, N} \right) ^{2}\right) \left( \left( \tilde{\Delta} Z ^{H,q}_{k, N} \right) ^{2}-\mathbf{E}\left( \tilde{\Delta} Z ^{H,q}_{k, N} \right) ^{2}\right) \right]\\
		&=&  \frac{ 2 ^{4HN}}{\vert \mathcal{L}_{ N, \gamma}\vert} \sum_{l\in \mathcal{L}_{N, \gamma}}\mathbf{E} \left( \left( \tilde{\Delta} Z ^{H,q}_{l, N} \right) ^{2}-\mathbf{E}\left( \tilde{\Delta} Z ^{H,q}_{l, N} \right) ^{2}\right)^{2}\\
		&=& \frac{ 2 ^{4HN}}{\vert \mathcal{L}_{ N, \gamma}\vert}  \sum_{l\in \mathcal{L}_{N, \gamma}}\left[ \mathbf{E} \left( \tilde{\Delta} Z ^{H,q}_{l, N} \right) ^{4}- 
		\left( \mathbf{E} \left( \tilde{\Delta} Z ^{H,q}_{l, N} \right) ^{2}\right) ^{2} \right] \\
		&=& 2 ^{4HN} \left[ \mathbf{E} \left( \tilde{\Delta} Z ^{H,q}_{l_{0}, N} \right) ^{4}- 
		\left( \mathbf{E} \left( \tilde{\Delta} Z ^{H,q}_{l_{0}, N} \right) ^{2}\right) ^{2} \right],
	\end{eqnarray*}
	for some $l_{0}\in \mathcal{L}_{ N, \gamma}$ (for instance one may take $l_0=1$). Notice that, in our previous computations the second and the  {\color{red}} fourth equalities result from Lemma \ref{ll1}, point 1. It follows from these computations and Proposition \ref{pp2} that for $N$ large enough, 
	\begin{equation}\label{9aa-7}
		\left| 	\mathbf{E}  \vert V_{N,1} \vert ^{2} -\left(  \mathbf{E} \vert Z ^{H,q}_{1} \vert ^{4}-1\right) \right| \leq C 2 ^{N ^{\beta}\frac{2H-2}{q}},
	\end{equation}
	and in particular,
	
	\begin{eqnarray*}
		\mathbf{E}  \vert V_{N,1} \vert ^{2} \to _{N\to \infty} \mathbf{E} \vert Z ^{H,q}_{1} \vert ^{4}-1.
	\end{eqnarray*}

	For later purpose, we need the following remark. 
	
	\begin{remark}
		\label{rem:bvn}
		It follows from (\ref{deco}), Propositions \ref{pp3} and \ref{pp4}, (\ref{9aa-7}) and Cauchy-Schwarz's inequality that $\sup_{N\in\mathbb{N}} 	\mathbf{E}  \vert V_{N} \vert<\infty$. Then using the fact that the random variable $V_N$ belongs to a finite sum of Wiener chaoses of orders not more than $2q$ (the latter fact results from (\ref{vn}), (\ref{hermite}) and the product formula (\ref{prod})) and Theorem 5.10 in \cite{jan97}, we get that $\sup_{N\in\mathbb{N}} 	\mathbf{E}  \vert V_{N} \vert^p<\infty$, for any fixed strictly positive real number $p$.  
	\end{remark} 
	
	Now, we need to define some distances between the probability distributions of random variables. We refer to \cite{NP-book}, Appendix C, for more details. Usually, the  distance between the laws of two real-valued random variables $F$ and $G$ is defined as
	\begin{equation}
		\label{dw}
		d_{W} (F, G)= \sup_{h\in \mathcal{A}}\left| \mathbf{E}h(F)-\mathbf{E}h(G)\right|,
	\end{equation}
	where $\mathcal{A}$ is a class of functions satisfying $h(F), h(G)\in L ^{1} (\Omega)$ for every $h\in \mathcal{A}$. When $\mathcal{A}$ is the set of Lipschitz continuous functions $h:\mathbb{R} \to \mathbb{R}$ such that $\Vert h\Vert _{Lip} \leq 1$, where
	$$\Vert h\Vert _{Lip}= \sup_{x, y\in \mathbb{R} , x\not=y} \frac{ \vert h(x)-h(y)\vert} {\vert x-y\vert },$$
	then (\ref{dw}) gives the Wasserstein distance. When $\mathcal{A}$ is the set of indicator functions $\{ 1_{-\infty, z]}, z\in \mathbb{R}\}$ then (\ref{dw}) gives the Kolmogorov distance,  for $\mathcal{A}= \{ 1_{B}, B \in \mathcal{B}(\mathbb{R})\}$ we have the total variation distance, while the choice of $\mathcal{A}$ to be the class of functions $h$ with $\Vert h\Vert _{Lip}+ \Vert h\Vert _{\infty}<\infty$  leads to the Fortet-Mourier distance.

	Let us recall a classical result from Stein-Malliavin calculus (see Theorem 5.1.3 and Remark 5.1.4 in \cite{NP-book}). Below, $d$ could be any of the above distances (Kolmogorov, Total variation, Wasserstein or Fortet-Mourier). We refer to the Appendix for the definition of the Malliavin derivative $D$ and of the Ornstein-Uhlenbeck operator $L$ with respect to an isonormal Gaussian process$(B (\varphi),\varphi\in{\mathcal{H}})$, where  ${\mathcal{H}}$ is a real separable infinite-dimensional Hilbert space. Notice that in our present article, we have $\mathcal{H}= L ^{2} (\mathbb{R})$.

	\begin{theorem}
		\label{tt1}
		Let $F$ be a random variable belonging to a finite sum of Wiener chaoses such that $\mathbf{E}F=0$ and $ \mathbf{E} F ^{2}= \sigma ^{2}$. Let $\sigma^{\prime}>0$. Then
		\begin{equation*}
			d\big(F, N(0, (\sigma^{\prime}) ^{2})\big)\leq C \left( \sqrt{{\rm Var} \left( \langle DF, D(-L)^{-1}F\rangle _{\mathcal{H}} \right) }+ \vert \sigma ^{2}- (\sigma^{\prime}) ^{2} \vert \right),
		\end{equation*}
		where $C>0$ is a universal constant and $N(0, (\sigma^{\prime}) ^{2})$ the centered Gaussian (normal) distribution with standard deviation $\sigma^{\prime}$. 
	\end{theorem}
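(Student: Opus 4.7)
The plan is to combine Stein's method with Malliavin calculus, following the classical Nourdin--Peccati strategy. First, I would fix a test function $h\in\mathcal{A}$ and consider the associated Stein equation
\[
(\sigma')^2 f'(x)-xf(x)=h(x)-\mathbf{E}h(N(0,(\sigma')^2)),
\]
whose bounded solution $f_h$ satisfies $\|f_h'\|_\infty \le C_{\mathcal{A}}$ for some constant depending only on the class $\mathcal{A}$ and on $\sigma'$. Evaluating this equation at $F$ and taking expectation reduces the problem to estimating $\bigl|\mathbf{E}\bigl[(\sigma')^2 f_h'(F)-F f_h(F)\bigr]\bigr|$.

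Second, I would use the Malliavin integration-by-parts identity $F=-\delta D L^{-1}F$, which is valid because $F$ belongs to a finite sum of Wiener chaoses and is therefore smooth in the Malliavin sense with finite moments of all orders. Combined with the chain rule for $D$, this identity produces
\[
\mathbf{E}[F f_h(F)]=\mathbf{E}\bigl[f_h'(F)\,\langle DF,-DL^{-1}F\rangle_{\mathcal{H}}\bigr],
\]
and hence
\[
\bigl|\mathbf{E}h(F)-\mathbf{E}h(N(0,(\sigma')^2))\bigr|\le C_{\mathcal{A}}\,\mathbf{E}\bigl|(\sigma')^2-\langle DF,-DL^{-1}F\rangle_{\mathcal{H}}\bigr|.
\]

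Third, I would invoke the basic identity $\mathbf{E}\,\langle DF,-DL^{-1}F\rangle_{\mathcal{H}}=\mathbf{E}F^2=\sigma^2$, which follows by testing the integration-by-parts formula against $f(x)=x$. The triangle inequality then splits the quantity $(\sigma')^2-\langle DF,-DL^{-1}F\rangle_{\mathcal{H}}$ into the deterministic gap $(\sigma')^2-\sigma^2$ and the centered random part $\sigma^2-\langle DF,-DL^{-1}F\rangle_{\mathcal{H}}$; applying Cauchy--Schwarz to the $L^1$-norm of the latter bounds it by $\sqrt{\mathrm{Var}(\langle DF,-DL^{-1}F\rangle_{\mathcal{H}})}$. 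Taking the supremum over $h\in\mathcal{A}$ in the resulting estimate then yields the announced inequality.

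The main technical obstacle I anticipate is the regularity of $f_h$: for the Wasserstein (or weaker) distances, $f_h$ is only $C^1$ with bounded first derivative, so the chain rule for the Malliavin derivative $D$ must be applied with care. This is handled by a standard smoothing argument, since $F$ lies in a finite sum of chaoses and therefore in every $\mathbb{D}^{k,p}$ with all $L^p$ norms equivalent; one approximates $f_h$ by smooth functions of polynomial growth and passes to the limit, exactly as in Section 5.1 of \cite{NP-book}. Once this point is granted, the rest of the argument is a clean chain of identities and inequalities.
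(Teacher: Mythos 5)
The paper does not prove this statement at all: it is quoted as a known result, with a pointer to Theorem 5.1.3 and Remark 5.1.4 of Nourdin--Peccati \cite{NP-book}, and your proposal is a correct reconstruction of exactly the argument given there (Stein equation, the identity $F=-\delta DL^{-1}F$, the chain rule, $\mathbf{E}\langle DF,-DL^{-1}F\rangle_{\mathcal{H}}=\mathbf{E}F^{2}$, then triangle inequality and Cauchy--Schwarz). The only caveat worth recording is one you already flag implicitly: the constant is not truly universal but depends on $\sigma'$ and on the class $\mathcal{A}$ through the Stein-factor bounds, a point the paper's statement glosses over but which is harmless for its applications since $\sigma'$ is there fixed.
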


	By using the above result, we obtain the following limit theorem for the sequence $(V_{N,1}, N\geq 1)$, defined in (\ref{vn1}). By $" \to ^{(d)}"$ we denote the convergence in distribution and all the scalar products appearing in the sequel are in $ L ^{2}(\mathbb{R})$. 
	
	\begin{prop}\label{pp5}
		Consider the sequence $\left(V_{N,1}, N\geq 1\right) $ given by (\ref{vn1}). As $N\to \infty$, 
		\begin{equation*}
			V_{N,1} \to ^{(d)} N\left(0, \left( \mathbf{E} \vert Z^{H,q}_{1}\vert ^{4}-1\right) \right), 
		\end{equation*}
		and for $N$ large enough, 
		\begin{equation*}
			d\left( V_{N,1}, N\left(0, \mathbf{E} \vert Z ^{H,q}_{1} \vert ^{4}-1\right) \right) \leq C(q) 2 ^{-\frac{N^{\gamma}}{2}}.
		\end{equation*}
	\end{prop}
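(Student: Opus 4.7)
The natural strategy is to apply Theorem~\ref{tt1} with $\sigma^2=\sigma_N^2:=\mathbf{E}V_{N,1}^2$ and $(\sigma')^2:=\mathbf{E}|Z^{H,q}_1|^4-1$. The deterministic variance discrepancy $|\sigma_N^2-(\sigma')^2|$ is already controlled by (\ref{9aa-7}) and is bounded by $C\,2^{N^\beta(2H-2)/q}$; since $\gamma<\beta$ and $(2H-2)/q<0$, this decays much faster than $2^{-N^\gamma/2}$ and contributes a negligible term to the Wasserstein bound. Hence the entire burden of the proof lies in establishing that
\[
\sqrt{\mathrm{Var}\bigl(\langle DV_{N,1},\,D(-L)^{-1}V_{N,1}\rangle_{L^{2}(\mathbb{R})}\bigr)}\;\leq\;C(q)\,2^{-N^\gamma/2}.
\]

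The first step would be to decompose $V_{N,1}$ across Wiener chaoses. Writing each dominant increment as $\tilde{\Delta}Z^{H,q}_{l,N}=I_{q}(\tilde{f}_{l,N})$ with $\tilde{f}_{l,N}\in L^{2}(\mathbb{R}^{q})$ supported on the cube $(e_{l-1,N,\beta}+2^{-N},\,e_{l,N,\beta}+2^{-N}]^{q}$ (see (\ref{9aa-1})), the product formula (\ref{prod}) yields
\[
(\tilde{\Delta}Z^{H,q}_{l,N})^{2}-\mathbf{E}(\tilde{\Delta}Z^{H,q}_{l,N})^{2}\;=\;\sum_{p=1}^{q} p!\binom{q}{p}^{2}\,I_{2p}\bigl(\tilde{f}_{l,N}\,\tilde{\otimes}_{q-p}\,\tilde{f}_{l,N}\bigr),
\]
so that $V_{N,1}=\sum_{p=1}^{q}p!\binom{q}{p}^{2}I_{2p}(G_{p,N})$ with $G_{p,N}=\frac{2^{2HN}}{\sqrt{|\mathcal{L}_{N,\gamma}|}}\sum_{l\in\mathcal{L}_{N,\gamma}}\tilde{f}_{l,N}\,\tilde{\otimes}_{q-p}\,\tilde{f}_{l,N}$. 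The crucial geometric observation is that the intervals $(e_{l-1,N,\beta}+2^{-N},\,e_{l,N,\beta}+2^{-N}]$ are pairwise disjoint for distinct $l\in\mathcal{L}_{N,\gamma}$, and consequently the summand kernels $\tilde{f}_{l,N}\,\tilde{\otimes}_{q-p}\,\tilde{f}_{l,N}$ have pairwise disjoint supports in $\mathbb{R}^{2p}$. This is the analog on the kernel side of the independence established in Lemma~\ref{ll1}.

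Next I would apply the standard Stein--Malliavin machinery for a finite sum of multiple integrals to express $\langle DV_{N,1},D(-L)^{-1}V_{N,1}\rangle$ as a combination of integrals of the form $\langle G_{p,N}\otimes_{r}G_{p',N},\,\cdot\,\rangle$ for $1\leq r\leq 2p\wedge 2p'$. The disjoint-support property kills every cross contraction $(\tilde{f}_{l,N}\,\tilde{\otimes}_{q-p}\,\tilde{f}_{l,N})\otimes_{r}(\tilde{f}_{l',N}\,\tilde{\otimes}_{q-p'}\,\tilde{f}_{l',N})$ with $r\geq 1$ and $l\neq l'$, because the $r$ integration variables would have to lie simultaneously in two disjoint regions of $\mathbb{R}$. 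All norms of contractions of $G_{p,N}$ therefore reduce to diagonal sums over $l\in\mathcal{L}_{N,\gamma}$ involving only self-contractions of the single kernel $\tilde{f}_{l,N}\,\tilde{\otimes}_{q-p}\,\tilde{f}_{l,N}$, with $\|\tilde{f}_{l,N}\|_{L^{2}(\mathbb{R}^{q})}^{2}\sim C\,2^{-2HN}$ (from Proposition~\ref{pp2}) serving as the basic scale.

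The main obstacle will be the combinatorial bookkeeping required to show that, once the $|\mathcal{L}_{N,\gamma}|$ diagonal contributions are summed and the overall $2^{4HN}/|\mathcal{L}_{N,\gamma}|$ normalization inherited from the prefactor of $V_{N,1}$ is accounted for, the variance of the inner product is of order $1/|\mathcal{L}_{N,\gamma}|$, uniformly in $p,p'\in\{1,\dots,q\}$. This amounts to verifying that every self-contraction $\|(\tilde{f}_{l,N}\,\tilde{\otimes}_{q-p}\,\tilde{f}_{l,N})\otimes_{r}(\tilde{f}_{l,N}\,\tilde{\otimes}_{q-p'}\,\tilde{f}_{l,N})\|$ is of the correct order $2^{-4HN}$ times a bounded constant, which can be obtained via hypercontractivity (\ref{hyper}) together with the moment estimates from Proposition~\ref{pp2} and Remark~\ref{rem:bvn}. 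Taking a square root gives $1/\sqrt{|\mathcal{L}_{N,\gamma}|}\leq C\,2^{-N^\gamma/2}$ in view of (\ref{8aa-7bis}), which concludes the quantitative estimate. The convergence in distribution $V_{N,1}\to N(0,\mathbf{E}|Z^{H,q}_1|^{4}-1)$ then follows immediately from the vanishing of the Wasserstein bound.
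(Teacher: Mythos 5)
Your proposal is correct and follows essentially the same route as the paper: apply Theorem~\ref{tt1}, control the variance discrepancy via (\ref{9aa-7}), expand $\langle DV_{N,1},D(-L)^{-1}V_{N,1}\rangle$ through the product formula, use the pairwise disjoint supports of the kernels to kill all cross terms in $l$, and bound each remaining contraction norm by powers of $\Vert \tilde f_{l,N}\Vert^2\le C 2^{-2HN}$ to get $\mathrm{Var}\le C(q)/\vert\mathcal{L}_{N,\gamma}\vert$. The only cosmetic slips are the reindexed coefficient (it should be $(q-p)!\binom{q}{p}^2$ rather than $p!\binom{q}{p}^2$) and the appeal to hypercontractivity, which is not needed since $\Vert f\otimes_r g\Vert\le\Vert f\Vert\,\Vert g\Vert$ suffices.
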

	\begin{proof}In order to use Theorem \ref{tt1}, let us first compute the quantity 
		$$ \langle DV_{N,1}, D(-L) ^{-1} V_{N,1} \rangle_{\mathcal{H}}.$$
		By (\ref{9aa-1}), we can write, for $l\in \mathcal{L}_{N}$,
		\begin{equation}\label{9aa-4}
			\tilde{\Delta}Z ^{H,q}_{l, N}= I_{q} (g_{l, N})
		\end{equation}
		with
		\begin{eqnarray*}
			g_{l, N}(y_{1},..., y_{q})&=& c(H,q)1_{ \left(  e_{l-1, N,\beta}+ 2^{-N}, e_{l,N,\beta}+2 ^{-N}\right]^{q}}(y_{1},..,y_{q}) \nonumber \\
			&&\int_{ e_{l, N,\beta}}^{e_{l,N, \beta}+ 2 ^{-N}}(u-y_{1} )_{+}^{-\left( \frac{1}{2}+ \frac{1-H}{q}\right)}\ldots (u-y_{q} )_{+}^{-\left( \frac{1}{2}+ \frac{1-H}{q}\right)}du,
		\end{eqnarray*}
		for every $y_{1},..., y_{q} \in \mathbb{R}$. By the product formula for multiple stochastic integrals (\ref{prod}) and (\ref{27a-1}),
		\begin{eqnarray*}
			\left( 	\tilde{\Delta}Z ^{H,q}_{l, N}\right) ^{2}- \mathbf{E}	\left( 	\tilde{\Delta}Z ^{H,q}_{l, N}\right) ^{2}= \sum_{r=0} ^{q-1} r!  \binom{q}{r} ^{2} I_{2q-2r} ( g_{l, N} \otimes _{r} g_{l, N})
		\end{eqnarray*}
		and therefore
		\begin{equation}\label{9aa-2}
			D_{\ast} \left[ 	\left( 	\tilde{\Delta}Z ^{H,q}_{l, N}\right) ^{2}-\mathbf{E}	\left( 	\tilde{\Delta}Z ^{H,q}_{k, N}\right) ^{2}\right] =\sum_{r=0} ^{q-1} r!  \binom{q}{r}^{2}(2q-2r) I_{2q-2r-1} ( g_{l, N} \tilde{\otimes} _{r} g_{l, N}(\cdot, \ast))
		\end{equation}
		and
		\begin{equation}\label{9aa-3}
			D _{\ast}(-L) ^{-1}\left[ 	\left( 	\tilde{\Delta}Z ^{H,q}_{l, N}\right) ^{2}-\mathbf{E}	\left( 	\tilde{\Delta}Z ^{H,q}_{k, N}\right) ^{2}\right] =\sum_{r=0} ^{q-1} r! \binom{q}{r}^{2} I_{2q-2r-1} ( g_{l, N} \tilde{\otimes} _{r} g_{l, N}(\cdot, \ast)).
		\end{equation}
		Above, $"\cdot"$ stands for the $2q-2r-1$ variables associated with the integral $I_{2q-2r-1}$, and $ "\ast"$ represents the remaining variable.
		The scalar product in $\mathcal{H}=L^{2}(\mathbb{R})$ considered below is with respect to the variable $"\ast"$,  Now,
		\begin{eqnarray*}
			&& \langle DV_{N,1}, D(-L) ^{-1} V_{N,1} \rangle_{\mathcal{H}}\\
			&=& \frac{ 2 ^{4HN}}{ \vert \mathcal{L}_{N, \gamma}\vert}\sum_{l,k\in \mathcal{L}_{ N, \gamma}}\langle 	D_{\ast} \left[ 	\left( 	\tilde{\Delta}Z ^{H,q}_{l, N}\right) ^{2} -	\mathbf{E}\left( 	\tilde{\Delta}Z ^{H,q}_{l, N}\right) ^{2}\right], 	D_{\ast}(-L) ^{-1} \left[ 	\left( 	\tilde{\Delta}Z ^{H,q}_{k, N}\right) ^{2} -\mathbf{E}	\left( 	\tilde{\Delta}Z ^{H,q}_{k, N}\right) ^{2}\right]\rangle_{\mathcal{H}} \\
			&=& \frac{ 2 ^{4HN}}{ \vert \mathcal{L}_{N, \gamma}\vert}\sum_{l\in \mathcal{L}_{ N, \gamma}}\langle 	D_{\ast} \left[ 	\left( 	\tilde{\Delta}Z ^{H,q}_{l, N}\right) ^{2} -	\mathbf{E}\left( 	\tilde{\Delta}Z ^{H,q}_{l, N}\right) ^{2}\right], 	D_{\ast}(-L) ^{-1} \left[ 	\left( 	\tilde{\Delta}Z ^{H,q}_{l, N}\right) ^{2} -\mathbf{E}	\left( 	\tilde{\Delta}Z ^{H,q}_{l, N}\right) ^{2}\right]\rangle_{\mathcal{H}}.
		\end{eqnarray*}
		We used the fact that, since $g_{l, N}, g_{k, N}$ have disjoint supports coordinate by coordinate for $l\not=k$ (see also Lemma 4 in \cite{BDT}), then 
		\begin{eqnarray*}
			&&	\langle 	D_{\ast} \left[ 	\left( 	\tilde{\Delta}Z ^{H,q}_{l, N}\right) ^{2} -	\mathbf{E}\left( 	\tilde{\Delta}Z ^{H,q}_{l, N}\right) ^{2}\right], 	D_{\ast}(-L) ^{-1} \left[ 	\left( 	\tilde{\Delta}Z ^{H,q}_{k, N}\right) ^{2} -\mathbf{E}	\left( 	\tilde{\Delta}Z ^{H,q}_{k, N}\right) ^{2}\right]\rangle_{\mathcal{H}}\\
			&&=\int_{\mathbb{R}} 	D_{x} \left[ 	\left( 	\tilde{\Delta}Z ^{H,q}_{l, N}\right) ^{2} -	\mathbf{E}\left( 	\tilde{\Delta}Z ^{H,q}_{l, N}\right) ^{2}\right]  	D_{x}(-L) ^{-1} \left[ 	\left( 	\tilde{\Delta}Z ^{H,q}_{l, N}\right) ^{2} -\mathbf{E}	\left( 	\tilde{\Delta}Z ^{H,q}_{l, N}\right) ^{2}\right]dx =0.
		\end{eqnarray*}
		By (\ref{9aa-2}),  (\ref{9aa-3}) and the definition of the contraction (\ref{contra}), 
		
		\begin{eqnarray*}
			&& \langle DV_{N,1}, D(-L) ^{-1} V_{N,1} \rangle_{\mathcal{H}}\\
			&=& \frac{ 2 ^{4HN}}{ \vert \mathcal{L}_{N, \gamma}\vert}\sum_{l\in \mathcal{L}_{ N, \gamma}}\sum_{r_{1}, r_{2}=0}^{q-1} r_{1}! r_{2} ! \binom{q}{r_{1}}^{2}\binom{q}{r_{2}}^{2} (2q-2r_{1}) \\
			&& \int_{\mathbb{R}} dx I_{2q-2r_{1}-1}\left( g_{l,N}\tilde{\otimes} _{r_{1}} g_{l, N} (\cdot, x) \right) I_{2q-2r_{2}-1}\left( g_{l,N}\tilde{\otimes} _{r_{2}} g_{l, N} (\cdot, x) \right) \\
			&=&  \frac{ 2 ^{4HN}}{ \vert \mathcal{L}_{N, \gamma}\vert}\sum_{l\in \mathcal{L}_{ N, \gamma}}\sum_{r_{1}, r_{2}=0}^{q-1} r_{1}! r_{2} ! \binom{q}{r_{1}}^{2}\binom{q}{r_{2}}^{2} (2q-2r_{1}) \\
			&& \sum_{a=0} ^{(2q-2r_{1}-1)\wedge (2q-2r_{2}-1)} a! \binom{2q-2r_{1}-1}{a} \binom{2q-2r_{2}-1}{a}\\
			&&\times  I _{4q-2r_{1}-2r_{2}-2a-2} \left( (g_{l, N}\tilde{\otimes} _{r_{1}}g_{l, N})\otimes _{a+1}(g_{l, N}\tilde{\otimes} _{r_{2}}g_{l, N}) \right),
		\end{eqnarray*}
		where we used again the product formula (\ref{prod}). By separating the terms with $r_{1}= r_{2} $ and $ r_{1} \not=r_{2}$ and by using (\ref{27a-1}), 
		\begin{eqnarray}
			&& \langle DV_{N,1}, D(-L) ^{-1} V_{N,1} \rangle_{\mathcal{H}} = \mathbf{E}  \langle DV_{N,1}, D(-L) ^{-1} V_{N,1} \rangle_{\mathcal{H}} \label{9aa-6} \\
			&&+\frac{ 2 ^{4HN}}{ \vert \mathcal{L}_{N, \gamma}\vert}\sum_{l\in \mathcal{L}_{ N, \gamma}}\sum_{r=0} ^{q-1} r! ^{2} \binom{q}{r}^{4} (2q-2r) \sum_{a=0} ^{2q-2r-2}
			a!\binom{2q-2r-1}{a}^{2} \nonumber \\
			&& I_{4q-4r-2a-2}\left( (g_{l, N}\tilde{\otimes} _{r}g_{l, N})\otimes _{a+1}(g_{l, N}\tilde{\otimes} _{r}g_{l, N}) \right)\nonumber \\
			&&+ \frac{ 2 ^{4HN}}{ \vert \mathcal{L}_{N, \gamma}\vert}\sum_{l\in \mathcal{L}_{ N, \gamma}}\sum_{r_{1}, r_{2}=0; r_{1}\not=r_{2}}^{q-1} r_{1}! r_{2} ! \binom{q}{r_{1}}^{2} \binom{q}{r_{2}}^{2} (2q-2r_{1}) \nonumber\\
			&& \sum_{a=0} ^{(2q-2r_{1}-1)\wedge (2q-2r_{2}-1)} a! \binom{2q-2r_{1}-1}{a} \binom{2q-2r_{2}-1}{a}\nonumber\\
			&&\times I _{4q-2r_{1}-2r_{2}-2a-2} \left( (g_{l, N}\tilde{\otimes} _{r_{1}}g_{l, N})\otimes _{a+1}(g_{l, N}\tilde{\otimes} _{r_{2}}g_{l, N}) \right).\nonumber
		\end{eqnarray}
		In the sequel we denote by $\|\cdot\|$ the norm associated with a space $L^2(\mathbb{R}^s)$, for any arbitrary positive integer $s$. It follows from (\ref{9aa-4}) and the isometry property of multiple stochastic integral (see (\ref{iso})) that
		\begin{equation}\label{9aa-5}
			q! \Vert g_{l,N}\Vert ^{2}= \mathbf{E} (\tilde{\Delta} Z ^{H,q}_{l, N})^{2} \leq 2 ^{-2HN}.
		\end{equation}
		Recall that, if $ f\in L ^{2} (\mathbb{R} ^{m})$ and $g\in L ^{2} (\mathbb{R} ^{n})$ , then $\Vert f\otimes _{r}g\Vert \leq \Vert f\Vert \times \Vert g\Vert$ for every $0\leq r\leq m\wedge n$. We then  obtain the following estimates, for $0\leq r\leq q-1$  and for $ 0\leq a \leq 2q-2r-2$, 
		\begin{eqnarray*}
			&&	\mathbf{E} \left(  I _{4q-4r-2a-2} \left( (g_{l, N}\tilde{\otimes} _{r}g_{l, N})\otimes _{a+1}(g_{l, N}\tilde{\otimes} _{r}g_{l, N}) \right)\right) ^{2} \\
			&=& C(q,r,a) \Vert  (g_{l, N}\tilde{\otimes} _{r}g_{l, N})\otimes _{a+1}(g_{l, N}\tilde{\otimes} _{r}g_{l, N})\Vert ^{2}\\
			& \leq& C(q,r,a) \Vert  (g_{l, N}\tilde{\otimes} _{r}g_{l, N})\Vert ^{4} \leq C(q,r,a) \Vert  (g_{l, N}\otimes _{r}g_{l, N})\Vert ^{4} \\
			&\leq & C(q,r,a) \Vert   g_{l, N} \Vert ^{8} \leq C(q,r,a) 2 ^{-8HN},
		\end{eqnarray*}
		due to (\ref{9aa-5}). Also, for $0\leq r_{1}\not=r_{2} \leq q-1$ and for $ 0\leq a \leq (2q-2r_{1}-1) \wedge ( 2q-2r_{2}-1)$, one has 
		\begin{eqnarray*}
			&& \mathbf{E} \left(  I _{4q-2r_{1}-2r_{2}-2a-2} \left( (g_{l, N}\tilde{\otimes} _{r_{1}}g_{l, N})\otimes _{a+1}(g_{l, N}\tilde{\otimes} _{r_{2}}g_{l, N}) \right)\right) ^{2} \\
			&=&C(q,r_{1}, r_{2}, a) \Vert  (g_{l, N}\tilde{\otimes} _{r_{1}}g_{l, N})\otimes _{a+1}(g_{l, N}\tilde{\otimes} _{r_{2}}g_{l, N})\Vert ^{2} \\
			&\leq & C(q,r_{1}, r_{2}, a) \Vert  (g_{l, N}\tilde{\otimes} _{r_{1}}g_{l, N})\Vert ^{2}  \Vert  (g_{l, N}\tilde{\otimes} _{r_{2}}g_{l, N})\Vert ^{2} \\
			&\leq &C(q,r_{1}, r_{2}, a) \Vert  (g_{l, N}\otimes _{r_{1}}g_{l, N})\Vert ^{2}  \Vert  (g_{l, N}\otimes _{r_{2}}g_{l, N})\Vert ^{2}\\
			&\leq & C(q,r_{1}, r_{2}, a)\Vert   g_{l, N} \Vert ^{8} \leq C(q,r_{1}, r_{2}, a)2 ^{-8HN}.
		\end{eqnarray*}
		By plugging these last two inequalities into (\ref{9aa-6}), for all $N$ large enough, we get (below we use again that $g_{l,N}$ and $g_{k,N}$ have disjoint supports, coordinate by coordinate, when $k\not=l$, also we use (\ref{iso}))
		\begin{eqnarray}
			&&	{\rm Var} \left(  \langle DV_{N,1}, D(-L) ^{-1} V_{N,1} \rangle_{\mathcal{H}} \right) \nonumber \\
			&\leq & C(q) \frac{ 2^{8HN}}{ \vert \mathcal{L}_{N, \gamma}\vert ^{2}} \sum _{l\in \mathcal{L}_{ N, \gamma}}\left[ \sup_{r,a} \mathbf{E} \left(  I _{4q-4r-2a-2} \left( (g_{l, N}\tilde{\otimes} _{r}g_{l, N})\otimes _{a+1}(g_{l, N}\tilde{\otimes} _{r}g_{l, N}) \right)\right) ^{2} \right. \nonumber \\
			&&\left. + \sup_{ r_{1}, r_{2}, a} \mathbf{E} \left(  I _{4q-2r_{1}-2r_{2}-2a-2} \left( (g_{l, N}\tilde{\otimes} _{r_{1}}g_{l, N})\otimes _{a+1}(g_{l, N}\tilde{\otimes} _{r_{2}}g_{l, N}) \right)\right) ^{2}\right]  \nonumber\\
			&\leq & C(q) \frac{1}{ \vert \mathcal{L}_{N, \gamma}\vert } \leq C(q) 2 ^{-N ^{\gamma}}, \label{9aa-8}
		\end{eqnarray}
		where the last inequality results from (\ref{8aa-7bis}). Next, we derive from Theorem \ref{tt1} and the estimates (\ref{9aa-7}) and (\ref{9aa-8}), that, for $N$ sufficiently large,  
		\begin{equation*}
			d\left( V_{N,1}, N\left(0, \mathbf{E} \vert Z_1 ^{H,q} \vert ^{4}-1\right) \right) \leq C(q) \max\left( 2 ^{N ^{\beta}\frac{2H-2}{q}}, 2 ^{-\frac{N ^{\gamma}}{2}}\right)\leq C(q) 2 ^{-\frac{ N ^{\gamma}}{2}},
		\end{equation*}
		where we used the assumption $\beta >\gamma$. \end{proof}
	
	Let us deduce the asymptotic behavior of the modified quadratic variation of the Hermite process.  By $d_{W}$ we denote the Wasserstein distance (see (\ref{dw}) for its definition). 
	
	\begin{theorem}\label{tt2}
		Let $(V_{N}, N\geq 1)$ be the sequence defined in (\ref{vn}). Then, as $N\to \infty$, 
		\begin{equation}
			\label{eq:ant3}
			V_{N} \to ^{(d)} N\left(0, \left( \mathbf{E} \vert Z^{H,q}_{1}\vert ^{4}-1\right) \right), 
		\end{equation}
		and for $N$ large enough, 
		\begin{equation}\label{3o-7}
			d_{W}\left( V_{N}, N\left(0, \mathbf{E} \vert Z_1^{H,q} \vert ^{4}-1\right) \right) \leq C(q) 2 ^{-\frac{ N ^{\gamma}}{2}}.
		\end{equation}
	\end{theorem}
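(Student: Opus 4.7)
The bulk of the work has already been done: Proposition \ref{pp5} gives both the CLT and the quantitative rate $C(q)2^{-N^{\gamma}/2}$ for the dominant part $V_{N,1}$, while Propositions \ref{pp3} and \ref{pp4} provide $L^{1}$ controls on the remainders $V_{N,2}$ and $V_{N,3}$. What is left is to glue these three ingredients together through the decomposition (\ref{deco}).

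The natural route is to use the triangle inequality for $d_W$ together with the classical bound $d_W(X,Y)\leq \mathbf{E}|X-Y|$, which follows from $|h(X)-h(Y)|\leq |X-Y|$ for every Lipschitz $h$ with $\|h\|_{Lip}\leq 1$. Writing $G\sim N(0,\mathbf{E}|Z^{H,q}_1|^4-1)$ and applying this to (\ref{deco}), I get
\begin{equation*}
d_W(V_N,G)\;\leq\; d_W(V_{N,1},G)+\mathbf{E}|V_N-V_{N,1}|\;\leq\; d_W(V_{N,1},G)+\mathbf{E}|V_{N,2}|+\mathbf{E}|V_{N,3}|.
\end{equation*}
Inserting the three available upper bounds then yields
\begin{equation*}
d_W(V_N,G)\;\leq\; C(q)\Bigl(2^{-N^{\gamma}/2}+2^{N^{\gamma}/2+N^{\beta}(2H-2)/q}+2^{N^{\gamma}/2+N^{\beta}(H-1)/q}\Bigr).
\end{equation*}

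The only point that truly needs checking is the rate comparison: since $H<1/1$ and $\gamma<\beta$, the quantity $N^{\beta}(H-1)/q$ tends to $-\infty$ and dominates $N^{\gamma}/2$ for $N$ large, so both exponentials carrying a $N^{\beta}$ contribution are in fact $o(2^{-N^{\gamma}/2})$. This yields (\ref{3o-7}). The weak convergence (\ref{eq:ant3}) then follows either immediately from (\ref{3o-7}) (since Wasserstein convergence implies convergence in distribution, the uniform boundedness of $\mathbf{E}|V_N|$ being guaranteed by Remark \ref{rem:bvn}), or alternatively by combining Proposition \ref{pp5} with Slutsky's theorem, using that $V_{N,2}$ and $V_{N,3}$ tend to $0$ in $L^1$ and hence in probability. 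I do not expect any genuine obstacle beyond the rate comparison; the theorem is essentially a clean assembly of the preceding propositions.
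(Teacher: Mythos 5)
Your argument is correct and is essentially identical to the paper's own proof: the same triangle-inequality decomposition $d_W(V_N,G)\le d_W(V_{N,1},G)+\mathbf{E}|V_{N,2}|+\mathbf{E}|V_{N,3}|$ combined with Propositions \ref{pp3}, \ref{pp4} and \ref{pp5}, and the same observation that $\gamma<\beta$ and $H<1$ make the remainder rates $o\bigl(2^{-N^{\gamma}/2}\bigr)$. (The "$H<1/1$" is evidently a typo for $H<1$ and does not affect the argument.)
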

	\begin{proof} The convergence in distribution follows immediately from the decomposition (\ref{deco}) and the results stated in Propositions \ref{pp3}, \ref{pp4} and \ref{pp5}. To get the rate of convergence under the Wasserstein distance, we write, via the triangle inequality and the definition of the Wasserstein distance (\ref{dw}),  
		\begin{eqnarray*}
			&&	d_{W}\left( V_{N}, N\left(0, \mathbf{E} \vert Z_1^{H,q} \vert ^{4}-1\right) \right) \\
			&\leq & 	d_{W}\left( V_{N,1}, N\left(0, \mathbf{E} \vert Z_1 ^{H,q} \vert ^{4}-1\right) \right) +d_{W}(V_{N}, V_{N,1})\\
			&\leq & d_{W}\left( V_{N,1}, N\left(0, \mathbf{E} \vert Z_1 ^{H,q} \vert ^{4}-1\right) \right)+ \mathbf{E} \vert V_{N}- V_{N,1}\vert \\
			&\leq & d_{W}\left( V_{N,1}, N\left(0, \mathbf{E} \vert Z_1 ^{H,q} \vert ^{4}-1\right) \right)+ \mathbf{E}\vert V_{N,2}\vert + \mathbf{E}\vert V_{N,3}\vert. 
		\end{eqnarray*}
		The estimates in  Propositions \ref{pp3}, \ref{pp4} and \ref{pp5} imply the desired conclusion. \end{proof}
	
	\section{Hurst parameter estimation} \label{sec5}
	
	In this section, the purpose is to estimate the Hurst parameter $H$ of the Hermite process based on the discrete observation of this process. We  only need to assume that the process $ Z^{H,q}$ is observed at the times $(e_{l,N,\beta}, e_{l, N, \beta}+2 ^{-N}, l \in \mathcal{L}_{N,\gamma})$ which is  true, in particular,  if $Z^{H,q}$ is observed on the interval $[0,1]$ at the dyadic points  $\frac{j}{2 ^{N}}$ for $j=0,1,..., 2 ^{N}$.

	Consider the sequence
	\begin{eqnarray}
		\label{def:SN}
		S_{N}&=& \frac{1}{ \vert \mathcal{L}_{N, \gamma}\vert }\sum_{l\in \mathcal{L}_{N, \gamma}} ( \Delta Z^{H,q}_{l,N}) ^{2}\nonumber \\
		&=&\frac{1}{ \vert \mathcal{L}_{N, \gamma}\vert }\sum_{l\in \mathcal{L}_{N, \gamma}}  \left( Z ^{H,q}_{ e_{l,N, \beta}+2 ^{-N}}-  Z ^{H,q}_{ e_{l,N, \beta}}\right) ^{2},
	\end{eqnarray}
	with the notation (\ref{e}). We clearly have, for every $N\geq 1$,
	\begin{equation}
		\label{9aa-88}
		\mathbf{E}S_{N}= 2 ^{-2HN}.
	\end{equation}
	In order to estimate $H$, the standard statistical procedure is to approximate $\mathbf{E}S_{N}$ by $ S_{N}$ in (\ref{def:SN}) and to take the (Napierian) logarithm in (\ref{9aa-88}). Thus, we get the estimator 
	\begin{equation}
		\label{est}
		\widehat{H}_{N} =\frac{ -\log( S_{N})}{2N\log 2 }, \hskip0.5cm N\geq 1.
	\end{equation}
	The purpose of the present section is to deduce, from the results in Section \ref{sec4}, the asymptotic properties of the above estimator. Let us start with an auxiliary result. 
	
	\begin{prop}\label{pp6} 
		For $N\geq 1$, let $ V_{N}$ be given by (\ref{vn}) and consider the set $\mathcal{L}_{N, \gamma}$ defined by (\ref{lng}). We set
		\begin{equation}
			\label{eq:ant6}
			U_{N}= 	\frac{V_{N}}{\sqrt{ \vert \mathcal{L}_{N, \gamma}\vert }  }=2^{2HN}S_N-1, \hskip0.5cm N\geq 1. 
		\end{equation}
		Then the sequence $(U_{N}, N\geq 1)$ converges almost surely to zero as $N\to \infty$ at the fast rate $2^{-N^a}$, where $a\in (0,\gamma)$ is arbitrary and fixed. 
	\end{prop}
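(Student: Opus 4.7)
The plan is to apply the Borel-Cantelli lemma to the events $\{2^{N^a}|U_N|>\epsilon\}$, controlling them via Markov's inequality at a sufficiently high moment, and leveraging the uniform $L^p$-boundedness of $V_N$ supplied by Remark \ref{rem:bvn}.

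First I would record the identity $V_N=\sqrt{|\mathcal{L}_{N,\gamma}|}\,(2^{2HN}S_N-1)$, which follows at once from the definition (\ref{vn}) of $V_N$, the expression (\ref{def:SN}) of $S_N$, and the variance formula (\ref{9aa-88}); this justifies the second equality in (\ref{eq:ant6}) and lets me work interchangeably with $U_N$ and $2^{2HN}S_N-1$.

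Next, fix $a\in(0,\gamma)$ and $\epsilon>0$. For any exponent $p\geq 1$, Markov's inequality together with the lower bound (\ref{8aa-7bis}) on $|\mathcal{L}_{N,\gamma}|$ yields, for all $N$ large enough,
$$\mathbf{P}\bigl(2^{N^a}|U_N|>\epsilon\bigr)\leq \epsilon^{-p}\,2^{pN^a}\,|\mathcal{L}_{N,\gamma}|^{-p/2}\,\mathbf{E}|V_N|^p\leq C(p,\epsilon)\,2^{p(N^a-N^\gamma/2)},$$
where the constant is finite thanks to $\sup_N \mathbf{E}|V_N|^p<\infty$ from Remark \ref{rem:bvn}. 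Since $a<\gamma$, one has $N^a-N^\gamma/2\leq -N^\gamma/4$ for all $N$ sufficiently large, so the right-hand side is at most $C(p,\epsilon)\,2^{-pN^\gamma/4}$. The series $\sum_N 2^{-pN^\gamma/4}$ converges (for any $p\geq 1$) because $N^\gamma\to\infty$, so Borel-Cantelli gives $2^{N^a}|U_N|\leq\epsilon$ almost surely for all $N$ large enough.

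Applying this along a sequence $\epsilon_k\downarrow 0$ on a full-measure event yields $2^{N^a}U_N\to 0$ almost surely, which is the claim. The argument is entirely routine: the only nontrivial ingredients are the moment bound from Remark \ref{rem:bvn} (itself a consequence of hypercontractivity within a finite sum of Wiener chaoses) and the cardinality estimate (\ref{8aa-7bis}), so no real obstacle is to be expected.
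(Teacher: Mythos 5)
Your proof is correct and follows essentially the same route as the paper: divide a uniform moment bound on $V_N$ by $\sqrt{\vert \mathcal{L}_{N,\gamma}\vert}\gtrsim 2^{N^{\gamma}/2}$ (via (\ref{8aa-7bis})), then apply Markov's inequality and Borel--Cantelli, using $a<\gamma$ to make the series converge. The only cosmetic difference is the source of the moment bound -- you invoke $\sup_N \mathbf{E}\vert V_N\vert^p<\infty$ from Remark \ref{rem:bvn}, whereas the paper bounds $\mathbf{E}\vert V_N\vert$ directly by $\mathbf{E}\vert G\vert + C(q)2^{-N^{\gamma}/2}$ using the Wasserstein estimate of Theorem \ref{tt2} with the $1$-Lipschitz function $\vert\cdot\vert$; both ingredients are available and $p=1$ already suffices for your argument.
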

	\begin{proof}Let $G$ be a random variable with Gaussian distribution $N(0, \mathbf{E} \vert Z_1 ^{H,q} \vert ^{4}-1)$. Using the fact that the absolute value on $\mathbb{R}$ is a Lipschitz continuous function with Lipschitz semi-norm $\| \cdot \|_{Lip}$ equals to $1$, we can derive from the definition of the Wasserstein distance $d_W$ (see (\ref{dw})) and from Theorem 2, that, for every $N$ large enough, we have
		\[
		\mathbf{E}|V_N|-\mathbf{E}|G|\le\big |\mathbf{E}|V_N|-\mathbf{E}|G|\big|\le d_W (V_N, G)\le  C(q) 2 ^{-\frac{1}{2}N ^{\gamma}}.
		\] 
		Then, combining (\ref{eq:ant6}) and (\ref{8aa-7bis}), we get, for all $N\ge 1$,
		\begin{equation}
			\label{eq:ant1}
			\mathbf{E}|U_N|\le C_1(q) 2^{-2^{-1}N^\gamma}.
		\end{equation}
		Next, let $a\in (0,\gamma)$ be arbitrary and fixed. We can derive from (\ref{eq:ant1}) and Markov's inequality that
		\begin{eqnarray*}
			\sum_{N\geq 1} P \left( \vert U_{N} \vert \geq 2 ^{-N ^{a}}\right) \leq \sum_{N\geq 1} 2 ^{N ^{a}}\mathbf{E} \vert U_{N} \vert 
			\leq C \sum_{N\geq 1} 2 ^{(N ^{a}-2^{-1}N ^{\gamma})} <\infty .
		\end{eqnarray*}
		Then, the almost sure convergence of the sequence $ (U_{N}, N\geq 1)$ to zero at the fast rate $2^{-N^a}$ follows by Borel-Cantelli's lemma. \end{proof}

	We have the following result.

	\begin{theorem}\label{tt3}
		The estimator (\ref{est}) is strongly consistent, i.e. $\widehat{H} _{N} \to _{N \to \infty}H$ almost surely, and its almost sure convergence holds at the fast rate $2^{-N^a}$, where $a\in (0,\gamma)$ is arbitrary and fixed.  Moreover,  
		it is asymptotically normal:
		
		\begin{equation}\label{3o-6}
			2N(\log 2) \sqrt{ \vert \mathcal{L}_{N, \gamma}\vert }\left(H-\widehat{H}_{N}\right)\to ^{(d)} _{N \to \infty} N(0, \mathbf{E} \vert Z_1^{H,q}\vert ^{4}-1).
		\end{equation}
	\end{theorem}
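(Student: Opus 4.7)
The plan is to reduce the theorem to facts already available: Proposition \ref{pp6} (almost sure convergence of $U_N$ at rate $2^{-N^a}$) and Theorem \ref{tt2} (the CLT for $V_N$). The starting point is the identity $S_N = 2^{-2HN}(1+U_N)$, which is just a rewriting of (\ref{eq:ant6}). Taking logarithms (legitimate almost surely for $N$ large, since $U_N\to 0$ a.s.) and dividing by $-2N\log 2$ gives the key relation
\[
\widehat{H}_N - H \;=\; -\,\frac{\log(1+U_N)}{2N\log 2}.
\]
Both conclusions of the theorem will be read off from this identity.

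For the strong consistency and its rate, fix $a\in(0,\gamma)$ arbitrary. Proposition \ref{pp6} yields $|U_N|\le 2^{-N^a}$ for all $N\ge N_0(\omega)$ almost surely; in particular $|U_N|\le 1/2$ eventually, so the elementary estimate $|\log(1+u)|\le 2|u|$ valid for $|u|\le 1/2$ gives
\[
|\widehat{H}_N - H| \;\le\; \frac{|U_N|}{N\log 2} \;=\; O\!\left(2^{-N^a}\right)\qquad \text{almost surely,}
\]
which proves both $\widehat{H}_N\to H$ a.s.\ and the announced rate.

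For the asymptotic normality, multiply the above identity by $-2N(\log 2)\sqrt{|\mathcal{L}_{N,\gamma}|}$ and use $\sqrt{|\mathcal{L}_{N,\gamma}|}\,U_N = V_N$ to write
\[
2N(\log 2)\sqrt{|\mathcal{L}_{N,\gamma}|}\,(H-\widehat{H}_N) \;=\; \sqrt{|\mathcal{L}_{N,\gamma}|}\,\log(1+U_N) \;=\; V_N + R_N,
\]
with Taylor remainder $R_N := \sqrt{|\mathcal{L}_{N,\gamma}|}\bigl(\log(1+U_N)-U_N\bigr)$. On the a.s.\ eventual event $\{|U_N|\le 1/2\}$, the bound $|\log(1+u)-u|\le u^2$ gives
\[
|R_N| \;\le\; \sqrt{|\mathcal{L}_{N,\gamma}|}\,U_N^2 \;=\; \frac{V_N^2}{\sqrt{|\mathcal{L}_{N,\gamma}|}}.
\]
Theorem \ref{tt2} provides the distributional convergence of $V_N$ to a centred Gaussian, hence $(V_N^2)_N$ is tight; combined with $\sqrt{|\mathcal{L}_{N,\gamma}|}\to\infty$, this forces $R_N\to 0$ in probability. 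Slutsky's lemma together with Theorem \ref{tt2} then yields (\ref{3o-6}).

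This is essentially a delta-method argument layered on top of the CLT for $V_N$, so there is no serious obstacle. The only point that requires a little care is controlling the second-order Taylor remainder $R_N$ in probability, but it reduces cleanly to the tightness of $V_N^2$ inherited from the Wasserstein bound in Theorem \ref{tt2} and the growth of $|\mathcal{L}_{N,\gamma}|$ guaranteed by (\ref{8aa-7bis}).
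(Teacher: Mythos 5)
Your proposal is correct, and its skeleton coincides with the paper's: the same exact identity $2N(\log 2)\sqrt{|\mathcal{L}_{N,\gamma}|}\,(H-\widehat{H}_N)=V_N+\sqrt{|\mathcal{L}_{N,\gamma}|}\bigl(\log(1+U_N)-U_N\bigr)$, the same use of Proposition \ref{pp6} for consistency and its rate, and the same bound $|R_N|\le V_N^2/\sqrt{|\mathcal{L}_{N,\gamma}|}$ on the Taylor remainder. The one genuine divergence is how that remainder is killed. The paper shows $V_N^2/\sqrt{|\mathcal{L}_{N,\gamma}|}\to 0$ \emph{almost surely}, which requires the uniform moment bound $\sup_N\mathbf{E}V_N^2<\infty$ of Remark \ref{rem:bvn} --- itself resting on a hypercontractivity argument for finite sums of Wiener chaoses --- followed by Markov's inequality and Borel--Cantelli. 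You instead observe that since $V_N$ converges in distribution, the sequence $(V_N^2)_N$ is tight, so dividing by $\sqrt{|\mathcal{L}_{N,\gamma}|}\to\infty$ already gives convergence to zero \emph{in probability}, which is all Slutsky's lemma needs. Your route is more elementary and makes Remark \ref{rem:bvn} unnecessary for this theorem; the paper's route yields the (unneeded here) stronger almost sure statement and an explicit rate $\mathcal{O}(2^{-2^{-1}N^\gamma})$ for the remainder. One small point of care common to both arguments: the bound $|\log(1+u)-u|\le u^2$ is only invoked on the eventual almost sure event $\{|U_N|\le 1/2\}$, which you correctly handle via Proposition \ref{pp6}.
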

	\begin{proof}  Let us first precisely determine the relationship between $\widehat{H}_{N}$ and the modified quadratic variation $V_{N}$. From (\ref{vn}), (\ref{def:SN}) and (\ref{eq:ant6}), we can write
		\begin{equation*}
			V_{N}= \sqrt{ \vert \mathcal{L}_{N, \gamma}\vert } \left( 2 ^{2HN}S_{N}-1\right),
		\end{equation*}
		so
		\begin{equation*}\label{10aa-1}
			\frac{V_{N}}{\sqrt{ \vert \mathcal{L}_{N, \gamma}\vert }  }+1=U_{N}+1=2 ^{2HN}S_{N},
		\end{equation*}
		or, equivalently,
		\begin{equation}\label{22n-1}
			2N\log 2 \sqrt{ \vert \mathcal{L}_{N, \gamma}\vert }\left( H-\widehat{H}_{N}\right) = V_{N}+  \sqrt{ \vert \mathcal{L}_{N, \gamma}\vert }\left( \log(1+U_{N})-U_{N}\right).
		\end{equation}
		Then, we can derive from (\ref{22n-1}) and the inequality 
		\begin{equation}
			\label{22n-2}
			\big | \log (1+x)-x \big|\le x^2,\quad \mbox{for all $x\in [-1/2,1/2]$,}
		\end{equation}
		and the fact that that $U_N=\frac{V_{N}}{\sqrt{ \vert \mathcal{L}_{N, \gamma}\vert }  }\to _{N\to \infty}0$ almost surely (see Proposition \ref{pp6}), that we have almost surely, for each $N$ large enough,
		\[
		\big | H-\widehat{H}_N\big | \le (2 \log 2)^{-1}\, N^{-1}\big ( \vert U_N\vert +U_N^2\big).
		\]
		Thus, in view of Proposition \ref{pp6}, it turns out that the estimator $\widehat{H}_N$ converges almost surely to the Hurst parameter $H$ 
		at the fast rate $2^{-N^a}$, where $a\in (0,\gamma)$ is arbitrary and fixed.
		
		Let us now show that (\ref{3o-6}) holds.  In view of (\ref{22n-1}) and Theorem \ref{tt2} it is enough to prove 
		that
		\begin{equation}\label{22n-3}
			\sqrt{ \vert \mathcal{L}_{N, \gamma}\vert }\left( \log(1+U_{N})-U_{N}\right)\to _{N\to \infty} 0 \mbox{ almost surely.}
		\end{equation}
		It follows from Proposition \ref{pp6}, (\ref{22n-2}), and (\ref{eq:ant6}), that we have, almost surely, for all $N$ large enough,
		\[
		\sqrt{ \vert \mathcal{L}_{N, \gamma}\vert }\left |\log(1+U_{N})-U_{N}\right| \le \frac{V_{N}^2}{\sqrt{ \vert \mathcal{L}_{N, \gamma}\vert }  }.
		\]
		Thus, (\ref{22n-3}) can be obtained by showing that 
		\begin{equation}\label{ant12}
			\frac{V_{N}^2}{\sqrt{ \vert \mathcal{L}_{N, \gamma}\vert }  }\to _{N\to \infty} 0 \mbox{ almost surely.}
		\end{equation}
		Observe that Remark \ref{rem:bvn} and (\ref{8aa-7bis}) imply that 
		\begin{equation}\label{ant13}
			\mathbf{E}\left (\frac{V_{N}^2}{\sqrt{ \vert \mathcal{L}_{N, \gamma}\vert }  }\right)=\mathcal{O}\left (2^{-2^{-1} N^\gamma}\right),\quad\mbox{for all $N$ large enough.}
		\end{equation}
		Finally, we can derive from (\ref{ant13}), Markov's inequality and Borel-Cantelli's lemma that (\ref{ant12}) is satisfied.
		
	\end{proof}
	
	Before ending this section let us discuss the roles of the two parameters $\beta\in (0,1)$ and $\gamma\in (0,\beta)$ which were introduced in (\ref{ln}) and (\ref{lng}).
	\begin{remark}
		\label{rem:rolebega}
		As we have already emphasized, in our present work we use for statistical inference the modified quadratic variation $V_N$ (see (\ref{vn})), which is obtained through some well-chosen dyadic increments of the Hermite process $Z^{H,q}$ and not all its dyadic increments. The parameter $\beta\in (0,1)$ is related with the selection procedure of these nice increments. Namely, among all the dyadic increments of the Hermite process $Z ^{H,q}_{(k+1)/2 ^{N}}- Z ^{H,q}_{k/2 ^{N}}$, $0\le k <2^N$, we only select those for which the integer $k$ is multiple of $[2 ^{N ^{\beta}}]$. Thus, when $\beta$ decreases and gets closer to zero the number of the selected increments increases; which somehow means that more data is available for statistical inference through our modified quadratic variation $V_N$. Yet, if we take in it all the selected increments for a given $\beta$ (i.e. if we replace in (\ref{vn}) the set $\mathcal{L}_{N, \gamma}$ by the set $\mathcal{L}_{N}$), then our proof of the Central Limit Theorem for $V_N$ (see Theorem \ref{tt2}) will no longer work. For this reason, we need to take in $V_N$ about $[2 ^{N ^{\gamma}}]$ of the selected increments, with $\gamma\in (0,\beta)$ being another parameter which should be chosen as close as possible to $\beta$, for improving as far as possible, for a given $\beta$, the rate of convergence toward normal distribution, provided by (\ref{3o-7}) in Theorem \ref{tt2}. It is an open difficult question to know what is the "best" choice of the parameter $\beta$. On one hand, it is tempting to seek to increase the value of $\beta$ since the chosen value for $\gamma$ can then be larger and thus the rate of convergence of $V_N$ toward normal distribution becomes better; on the other hand increasing the value of $\beta$ somehow means that less data is available for statistical inference, which may be a drawback in one way or the other. 
	\end{remark}

	\section{Quadratic variation and Hurst estimation for the Hermite-Ornstein-Uhlenbeck process}
	\label{sec6}
	The results obtained in Sections \ref{sec4} and \ref{sec5} can be easily transferred to other Hermite-related stochastic processes. To illustrate this point, we study in the present section the case of the Ornstein-Uhlenbeck process associated to the Hermite process, which will be called the Hermite Ornstein-Uhlenbeck (HOU) process in the sequel. 
	
	The HOU process is defined as the unique solution of the Langevin equation 
	\begin{equation}\label{hou}
		X_{t}= \xi-\int_{0} ^{t} X_{s}ds + Z^{H,q} _{t},  \hskip0.5cm t\geq 0
	\end{equation}
	with initial condition $\xi \in L ^ {2} (\Omega)$, where $ Z ^{H,q}$ is a Hermite process of  any order $q\geq 1$ with  an arbitrary self-similarity index $H\in (\frac{1}{2}, 1)$.  For simplicity, we take $\xi=0$.   We mention in passing that the solution of the Langevin equation (\ref{hou}) can be expressed as a Wiener-integral with respect to $ Z^{H,q}$ (see \cite{AT} or \cite{NT}), but we will not need to use this integral representation in the present section.  For more details on the HOU process, we refer to \cite{AT}, \cite{CKM} or \cite{NT}. For later purposes, we recall its following property: for every $p\geq 2$ and $T>0$,
	\begin{equation}\label{26n-4}
		\sup_{t\in [0, T]} \mathbf{E} \vert X_{t}\vert ^ {p} < C,
	\end{equation}
	where $C$ is a positive finite constant depending only on $p$ and $T$. 
	
	For every integer $N\geq 1$, the modified quadratic variation $V_N (X)$ is defined as
	\begin{eqnarray}
		V_{N}(X) &=& \frac{ 2 ^{2HN}}{ \sqrt{ \vert \mathcal{L}_{N, \gamma}\vert}}\sum_{l\in \mathcal{L}_{N, \gamma}} \left[ \left(X_{ e_{l,N, \beta}+2 ^{-N}}-  X_{ e_{l,N, \beta}}\right) ^{2}- \mathbf{E} \left( Z ^{H,q}_{ e_{l,N, \beta}+2 ^{-N}}-  Z ^{H,q}_{ e_{l,N, \beta}}\right) ^{2}\right]\nonumber \\
		&=&\frac{ 2 ^{2HN}}{ \sqrt{ \vert \mathcal{L}_{N, \gamma}\vert}}\sum_{l\in \mathcal{L}_{N, \gamma}} \left[( \Delta X_{l,N}) ^{2}- \mathbf{E}( \Delta Z^{H,q}_{l,N} )^{2}\right],\label{vnx}
	\end{eqnarray}
	where $ \mathcal{L}_{N, \gamma}$ is the same set as in (\ref{lng}) and $e_{l,N, \beta}$ is given by (\ref{e}). Also, we mention that
	\begin{equation*}
		\Delta X_{l,N} =X_{ e_{l,N, \beta}+2 ^{-N}}-  X_{ e_{l,N, \beta}}.
	\end{equation*}
	As a consequence of Theorem \ref{tt2}, we deduce the following asymptotic behavior in distribution of $V_{N}(X)$:
	\begin{prop}\label{pp7}
		Let $ (V_{N}(X), N\geq 1)$ be given by (\ref{vnx}).  As $N\to \infty$, 
		\begin{equation*}
			V_{N}(X) \to ^{(d)} N\left(0, \left( \mathbf{E} \vert Z^{H,q}_{1}\vert ^{4}-1\right) \right), 
		\end{equation*}
		and for $N$ large enough, 
		\begin{equation*}
			d_{W}\left( V_{N}(X), N\left(0, \mathbf{E} \vert Z_1 ^{H,q} \vert ^{4}-1\right) \right) \leq C(q) 2 ^{-\frac{N ^{\gamma}}{2}}.
		\end{equation*}
	\end{prop}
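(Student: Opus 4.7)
The plan is to reduce the statement to Theorem \ref{tt2} by showing that replacing the increments of $Z^{H,q}$ by those of $X$ produces only an asymptotically negligible perturbation. From the Langevin equation (\ref{hou}) with $\xi=0$, for every $l\in\mathcal{L}_{N,\gamma}$ one has
\begin{equation*}
	\Delta X_{l,N}=\Delta Z^{H,q}_{l,N}+R_{l,N},\qquad R_{l,N}:=-\int_{e_{l,N,\beta}}^{e_{l,N,\beta}+2^{-N}}X_{s}\,ds .
\end{equation*}
Squaring and summing, one gets the decomposition $V_{N}(X)=V_{N}+W_{N,1}+W_{N,2}$, where
\begin{equation*}
	W_{N,1}= \frac{2\cdot 2^{2HN}}{\sqrt{|\mathcal{L}_{N,\gamma}|}}\sum_{l\in\mathcal{L}_{N,\gamma}} \Delta Z^{H,q}_{l,N}\,R_{l,N},\qquad  W_{N,2}=\frac{2^{2HN}}{\sqrt{|\mathcal{L}_{N,\gamma}|}}\sum_{l\in\mathcal{L}_{N,\gamma}} R_{l,N}^{2}.
\end{equation*}
The whole task is then to prove that $\mathbf{E}|W_{N,1}|$ and $\mathbf{E}|W_{N,2}|$ decay faster than $2^{-N^{\gamma}/2}$, so that the triangle inequality, together with Theorem \ref{tt2} applied to $V_N$, yields both the weak convergence and the announced rate for the Wasserstein distance.

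The first step is to estimate the moments of $R_{l,N}$. By Jensen's inequality and (\ref{26n-4}) applied with $T=1$, one has
\begin{equation*}
	\mathbf{E}|R_{l,N}|^{2}\le 2^{-N}\int_{e_{l,N,\beta}}^{e_{l,N,\beta}+2^{-N}}\mathbf{E}X_{s}^{2}\,ds \le C\,2^{-2N}.
\end{equation*}
The second step is to plug this into $W_{N,2}$: using (\ref{8aa-7}) one gets
\begin{equation*}
	\mathbf{E}|W_{N,2}|\le \frac{2^{2HN}}{\sqrt{|\mathcal{L}_{N,\gamma}|}}|\mathcal{L}_{N,\gamma}|\,C\,2^{-2N}\le C\,2^{N^{\gamma}/2}\,2^{-2N(1-H)},
\end{equation*}
which decays at an exponential rate in $N$ since $H<1$, hence much faster than $2^{-N^{\gamma}/2}$. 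For $W_{N,1}$ the Cauchy--Schwarz inequality combined with (\ref{29a-4}) and the bound on $\mathbf{E}R_{l,N}^{2}$ gives $\mathbf{E}|\Delta Z^{H,q}_{l,N}R_{l,N}|\le C\,2^{-(H+1)N}$, so
\begin{equation*}
	\mathbf{E}|W_{N,1}|\le C\,\sqrt{|\mathcal{L}_{N,\gamma}|}\;2^{HN-N}\le C\,2^{N^{\gamma}/2}\,2^{-(1-H)N},
\end{equation*}
again exponentially small.

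The final step is to conclude via the triangle inequality
\begin{equation*}
	d_{W}\bigl(V_{N}(X),N(0,\mathbf{E}|Z_{1}^{H,q}|^{4}-1)\bigr)\le d_{W}\bigl(V_{N},N(0,\mathbf{E}|Z_{1}^{H,q}|^{4}-1)\bigr)+\mathbf{E}|W_{N,1}|+\mathbf{E}|W_{N,2}|,
\end{equation*}
where the first term is controlled by Theorem \ref{tt2}. Since both remainders decay exponentially and the bound in (\ref{3o-7}) is only $2^{-N^{\gamma}/2}$, the first term dominates and gives the announced rate. The convergence in distribution is then immediate from Slutsky's theorem since $W_{N,1}+W_{N,2}\to 0$ in $L^{1}(\Omega)$.

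I do not anticipate a genuine obstacle here: the main point is simply that $X$ is a bounded-in-$L^{p}$ perturbation of $Z^{H,q}$ in the sense that the extra drift integrated over an interval of length $2^{-N}$ produces an $O(2^{-N})$ correction to the increment, which is polynomially smaller than $2^{-HN}$; squaring and summing cannot overcome the prefactor $2^{2HN}/\sqrt{|\mathcal{L}_{N,\gamma}|}$. The only mild care is to verify that the exponential rates $2^{-2N(1-H)}$ and $2^{-(1-H)N}$ indeed dominate $2^{-N^{\gamma}/2}$ for $\gamma<1$, which is obvious.
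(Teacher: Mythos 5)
Your proposal is correct and follows essentially the same route as the paper: the authors also write $X=Z^{H,q}+Y$ with $Y_t=-\int_0^t X_s\,ds$ (your $R_{l,N}$ is exactly $\Delta Y_{l,N}$), obtain the identical three-term decomposition of $V_N(X)$, bound the two remainders in $L^1(\Omega)$ by $C\,2^{N^{\gamma}/2}2^{-2(1-H)N}$ and $C\,2^{N^{\gamma}/2}2^{-(1-H)N}$ using (\ref{26n-4}), Cauchy--Schwarz and (\ref{8aa-7}), and conclude by the triangle inequality for $d_W$ together with Theorem \ref{tt2}. No gaps.
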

	\begin{proof}For $t\geq 0$, let 
		\begin{equation*}
			Y_{t}= -\int_{0} ^{t} X_{s}ds,
		\end{equation*}
		where $X$ is the solution to (\ref{hou}). Then we can derive from (\ref{hou}) and elementary computations that
		\begin{equation}
			\label{decoVNX}
			V_{N}(X)= V_{N}+ \frac{ 2 ^{2HN}}{ \sqrt{ \vert \mathcal{L}_{N, \gamma}\vert}}\sum_{l\in \mathcal{L}_{N, \gamma}} ( \Delta Y_{l,N}) ^{2}+2 \frac{ 2 ^{2HN}}{ \sqrt{ \vert \mathcal{L}_{N, \gamma}\vert}}\sum_{l\in \mathcal{L}_{N, \gamma}} (\Delta Z^{H,q}_{l,N})  ( \Delta Y_{l,N}),
		\end{equation}
		where $V_{N}$ is given by (\ref{vn}) and $\Delta Y_{l,N} =Y_{ e_{l,N, \beta}+2 ^{-N}}-  Y_{ e_{l,N, \beta}}.$ Notice that, in view of Theorem \ref{tt2}, when $N$ goes to $\infty$, the first term in the right-hand side of (\ref{decoVNX}) converges in distribution to the desired limit at the desired rate $2 ^{-\frac{N ^{\gamma}}{2}}$. Thus, in order to obtain the proposition, it is enough to show that, when $N$ tends to $\infty$,  the other two terms, in the right-hand side of (\ref{decoVNX}), converge to zero in $ L^{1}(\Omega)$ at a faster rate than $2 ^{-\frac{N ^{\gamma}}{2}}$. This is true. Indeed, using Cauchy-Schwarz's inequality as well as the inequalities (\ref{26n-4}) and (\ref{8aa-7}), we get that
		\begin{eqnarray*}
			&&\mathbf{E} \left|  \frac{ 2 ^{2HN}}{ \sqrt{ \vert \mathcal{L}_{N, \gamma}\vert}}\sum_{l\in \mathcal{L}_{N, \gamma}} ( \Delta Y_{l,N}) ^{2}\right| =  \frac{ 2 ^{2HN}}{ \sqrt{ \vert \mathcal{L}_{N, \gamma}\vert}}\sum_{l\in \mathcal{L}_{N, \gamma}} \mathbf{E} \left( \int_{e_{l,N,\beta}}^{e_{l,N, \beta}+ 2 ^{-N}}X_{s}ds \right) ^{2} \\
			&&\leq  \frac{ 2 ^{2HN}}{ \sqrt{ \vert \mathcal{L}_{N, \gamma}\vert}}2 ^{-N}\sum_{l\in \mathcal{L}_{N, \gamma}} \int_{e_{l,N,\beta}}^{e_{l,N, \beta}+ 2 ^{-N}}\mathbf{E} X_{s}^{2} ds \leq C 2 ^{(2H-2)N} \sqrt{ \vert \mathcal{L}_{N, \gamma}\vert}\\
			&&\leq C 2 ^{(2H-2)N + \frac{N ^{\gamma}}{2}}=o\left (2^{-(1-H)N}\right),\quad\mbox{for all $N$ large enough,}
		\end{eqnarray*}
		where the last equality results from the fact that $\gamma <1$. Similar arguments allow us to derive that
		\begin{eqnarray*}
			&&\mathbf{E}\left|  \frac{ 2 ^{2HN}}{ \sqrt{ \vert \mathcal{L}_{N, \gamma}\vert}}\sum_{l\in \mathcal{L}_{N, \gamma}} (\Delta Z^{H,q}_{l,N})  ( \Delta Y_{l,N}) \right| \leq  \frac{ 2 ^{2HN}}{ \sqrt{ \vert \mathcal{L}_{N, \gamma}\vert}}\sum_{l\in \mathcal{L}_{N, \gamma}} \left( \mathbf{E}(\Delta Z^{H,q}_{l,N}) ^{2} \right) ^{\frac{1}{2}} \left( \mathbf{E} ( \Delta Y_{l,N})^{2} \right) ^{\frac{1}{2}} \\
			&&\leq C 2 ^{(H-1)N + \frac{N ^{\gamma}}{2}}=o\left (2^{-(1-H)\frac{N}{2}}\right),\quad\mbox{for all $N$ large enough.} 
		\end{eqnarray*}
		
	\end{proof}
	
	In view of Proposition \ref{pp7}, the same procedure as in Section \ref{sec5} can be used in order to obtain a strongly consistent and asymptotically normal estimator for the Hurst parameter $H$ in (\ref{hou}). More precisely, if $X$ is the HOU process defined by (\ref{hou}), let
	
	\begin{equation*}
		S_{N}(X)= \frac{1}{ \vert \mathcal{L}_{N, \gamma}\vert }\sum_{l\in \mathcal{L}_{N, \gamma}} ( \Delta X_{l,N}) ^{2}, \hskip0.5cm N\geq 1.
	\end{equation*}
	Similarly to (\ref{decoVNX}), it can be shown that
	\[
	2^{2HN}S_{N}(X)= 2^{2HN}S_{N}+ \frac{ 2 ^{2HN}}{ \vert \mathcal{L}_{N, \gamma}\vert}\sum_{l\in \mathcal{L}_{N, \gamma}} ( \Delta Y_{l,N}) ^{2}+2 \frac{ 2 ^{2HN}}{\vert \mathcal{L}_{N, \gamma}\vert}\sum_{l\in \mathcal{L}_{N, \gamma}} (\Delta Z^{H,q}_{l,N})  ( \Delta Y_{l,N}),
	\]
	where $S_N$ is as in (\ref{def:SN}). Then, 
	we can derive from Proposition \ref{pp6}, Borel-Cantelli's lemma and the estimates obtained in the proof of Proposition \ref{pp7} that we have $ S_{N}(X) \sim 2 ^{-2HN}$ (in the sense  that $2^{2HN}S_{N}(X)-1$ converges almost surely to zero as $N\to \infty$). Thus, it turns out that 
	\begin{equation*}
		\widehat{H}_{N}(X) =\frac{ -\log( S_{N}(X))}{2N\log 2 }
	\end{equation*}
	is a strongly consistent estimator for the Hurst parameter $H$ of the Hermite Ornstein-Uhlenbeck process. Moreover, by using Proposition \ref{pp7} and by arguing as in the proof of Theorem \ref{tt3}, we can show that this estimator is asymptotically normal:
	\begin{equation*}
		2N(\log 2) \sqrt{ \vert \mathcal{L}_{N, \gamma}\vert }\left( H-\widehat{H}_{N}(X)\right)\to ^{(d)} _{N \to \infty} N(0, \mathbf{E} \vert Z_1^{H,q}\vert ^{4}-1).
	\end{equation*}

	\section{Appendix: Multiple stochastic integrals and the Malliavin derivative}\label{app}
	
	The basic tools from the analysis on Wiener space are presented in this section. We will focus on some elementary facts about multiple stochastic integrals. We refer to \cite{N} or \cite{NP-book} for a complete review on the topic. 
	
	Consider ${\mathcal{H}}$ a real separable infinite-dimensional Hilbert space
	with its associated inner product ${\langle
		\cdot,\cdot\rangle}_{\mathcal{H}}$, and $(B (\varphi),
	\varphi\in{\mathcal{H}})$ an isonormal Gaussian process on a
	probability space $(\Omega, {\mathfrak{F}}, \mathbb{P})$, which is a
	centered Gaussian family of random variables such that
	$\mathbf{E}\left( B(\varphi) B(\psi) \right) = {\langle\varphi,
		\psi\rangle}_{{\mathcal{H}}}$ for every
	$\varphi,\psi\in{\mathcal{H}}$. Denote by $I_{q} (q\geq 1)$ the $q$th multiple
	stochastic integral with respect to $B$, which is an
	isometry between the Hilbert space ${\mathcal{H}}^{\odot q}$
	(symmetric tensor product) equipped with the scaled norm
	$\sqrt{q!}\,\Vert\cdot\Vert_{{\mathcal{H}}^{\otimes q}}$ and
	the Wiener chaos of order $q$, which is defined as the closed linear
	span of the random variables $H_{q}(B(\varphi))$ where
	$\varphi\in{\mathcal{H}},\;\Vert\varphi\Vert_{{\mathcal{H}}}=1$ and
	$H_{q}$ is the Hermite polynomial of degree $q\geq 1$ defined
	by :\begin{equation}\label{Hermite-poly}
		H_{q}(x)=\frac{(-1)^{q}}{q!} \exp \left( \frac{x^{2}}{2} \right) \frac{{\mathrm{d}}^{q}%
		}{{\mathrm{d}x}^{q}}\left( \exp \left(
		-\frac{x^{2}}{2}\right)\right),\;x\in \mathbb{R}.
	\end{equation}
	For $q=0$, 
	\begin{equation}\label{27a-1}
		\mathcal{H}_{0}=\mathbb{R} \mbox{ and }I_{0}(x)=x \mbox{ for every }x\in \mathbb{R}.
	\end{equation}
	The isometry property of multiple integrals can be written as follows : for $p,\;q\geq
	0$,\;$f\in{{\mathcal{H}}^{\otimes p}}$ and
	$g\in{{\mathcal{H}}^{\otimes q}}$
	\begin{equation} \mathbf{E}\Big(I_{p}(f) I_{q}(g) \Big)= \left\{
		\begin{array}{rcl}\label{iso}
			q! \langle \tilde{f},\tilde{g}
			\rangle _{{\mathcal{H}}^{\otimes q}}&&\mbox{if}\;p=q,\\
			\noalign{\vskip 2mm} 0 \quad\quad&&\mbox{otherwise,}
		\end{array}\right.
	\end{equation}
	where $\tilde{f}$ stands for the symmetrization of $f$. When $\mathcal{H}= L^{2}(T)$, with $T$ being an interval of $\mathbb{R}$, we have the following product formula: for $p,\;q\geq
	0$,\;  $f\in{{\mathcal{H}}^{\odot p}}$ and
	$g\in{{\mathcal{H}}^{\odot q}}$,  
	
	\begin{eqnarray}\label{prod}
		I_{p}(f) I_{q}(g)&=& \sum_{r=0}^{p \wedge q} r! \binom{q}{r} \binom{p}{r}I_{p+q-2r}\left(f\otimes_{r}g\right),
	\end{eqnarray}
	where, for $r=0, ..., p\wedge q$, the contraction $f\otimes _{r} g$ is the function in $L ^{2}( T ^{p+q-2r}) $ given by 
	
	\begin{equation}\label{contra}
		(f\otimes _{r} g) (t_{1},..., t_{p+q-2r})= \int_{T^{r}} f(u_{1},..., u_{r}, t_{1},..., t_{p-r}) g(u_{1},..., u_{r}, t_{p-r+1},...,t_{p+q-2r}) du_{1}...du_{r}.
	\end{equation}

	An useful  property of  finite sums of multiple stochastic integrals is the hypercontractivity. Namely,  for every fixed real number $p\geq 2$, there exists a universal deterministic finite constant $C_p$, such that, for any random variable $F$ of the form $F= \sum_{k=0} ^{n} I_{k}(f_{k}) $ with $f_{k}\in \mathcal{H} ^{\otimes k}$, the following inequality holds:
	\begin{equation}
		\label{hyper}
		\mathbf{E}\vert F \vert ^{p} \leq C_{p} \left( \mathbf{E}F ^{2} \right) ^{\frac{p}{2}}.
	\end{equation}

	We denote by $D$ the Malliavin derivative operator that acts on
	cylindrical random variables of the form $F=g(B(\varphi
	_{1}),\ldots,B(\varphi_{n}))$, where $n\geq 1$,
	$g:\mathbb{R}^n\rightarrow\mathbb{R}$ is a smooth function with
	compact support and $\varphi_{i} \in {{\mathcal{H}}}$, in the following way:
	\begin{equation*}
		DF=\sum_{i=1}^{n}\frac{\partial g}{\partial x_{i}}(B(\varphi _{1}),
		\ldots , B(\varphi_{n}))\varphi_{i}.
	\end{equation*}
	The operator $D$ is closable and it can be extended to $\mathbb{D} ^{1, 2}$ which denotes the closure of the set of cylindrical random variables with respect to the norm $\Vert \cdot\Vert _{1,2}$ defined as
	\begin{equation*}
		\Vert F\Vert _{1,2} ^{2}:= \mathbf{E}\vert F\vert ^{2}+ \mathbf{E} \Vert  DF\Vert _{\mathcal{H}} ^{2}. 
	\end{equation*} 
	If $F=I_{p}(f)$, where $f\in \mathcal{H} ^{\odot p}$ with $\mathcal{H}= L^{2}(T)$ and $p\geq 1$, then 
	$$D_{\ast}F=pI_{p-1} \left( f(\cdot, \ast)\right),$$
	where $"\cdot "$ stands for $p-1$ variables.
	
	The pseudo inverse $ (-L) ^{-1}$ of the Ornstein-Uhlenbeck operator $L$ is defined, for $F=I_{p}(f)$ with $f\in \mathcal{H} ^{\odot p}$ and $p\geq 1$, by 
	\begin{equation*}
		(-L) ^{-1}F= \frac{1}{p} I_{p} (f).
	\end{equation*}
	
	At last notice that in our work, we have $\mathcal{H}= L ^{2} (\mathbb{R})$ while the role of the isonormal process $(B(\varphi), \varphi \in \mathcal{H}) $ is played by  the usual Wiener integral on $L ^{2} (\mathbb{R})$ associated with the Brownian motion $(B(y), y \in \mathbb{R})$.
	

	%
	%
	

\vskip0.5cm

\noindent{\bf Acknowledgments: }
		The  authors acknowledge partial support from the Labex CEMPI (ANR-11-LABX-007-01) and the GDR 3475 (Analyse Multifractale et Autosimilarit\'e).
		A. Ayache also acknowledges partial support from the Australian Research Council's Discovery Projects funding scheme (project number  DP220101680).  C. Tudor also acknowledges partial support from the projects ANR-22-CE40-0015, MATHAMSUD (22-
		MATH-08),  ECOS SUD (C2107), Japan Science and Technology Agency CREST  JPMJCR2115 and  by a grant of the Ministry of Research, Innovation and Digitalization (Romania), CNCS-UEFISCDI, PN-III-P4-PCE-2021-0921, within PNCDI III.

\end{document}